\documentclass[12pt]{amsart}
\usepackage{amsmath,amsfonts,amssymb,amsthm, a4wide, url, mathscinet,mathdots, bbm}
\usepackage
[colorlinks = true,
            linkcolor = blue,
            urlcolor  = blue,
            citecolor = green,
            anchorcolor = blue]{hyperref}
\usepackage[usenames]{color}
\usepackage[nobysame,initials]{amsrefs}
\usepackage{xcolor}
\bibliographystyle{plain}
\newtheorem{theorem}{Theorem}[section]
\newtheorem{lemma}[theorem]{Lemma}
\newtheorem{claim}[theorem]{Claim}

\newtheorem{proposition}[theorem]{Proposition}
\newtheorem{corollary}[theorem]{Corollary}
\newtheorem{conjecture}[theorem]{Conjecture}
\newtheorem{remark}[theorem]{Remark}
\newtheorem{obs}[theorem]{Observation}
\newtheorem{problema}[theorem]{Problem}
\newtheorem*{notat*}{Notation}

\setlength{\parskip}{0.5em}

\let\originalleft\left
\let\originalright\right
\renewcommand{\left}{\mathopen{}\mathclose\bgroup\originalleft}
\renewcommand{\right}{\aftergroup\egroup\originalright}

\numberwithin{equation}{section}

\global\long\def\conv#1{{\rm conv}\left(#1\right)}%

\global\long\def\PP{{\rm Pr}}%

\global\long\def\iprod#1#2{\langle#1,\,#2\rangle}%

\global\long\def\d{{\,\rm d}}%

\global\long\def\sub{\subset}%

\global\long\def\vol#1{{\rm vol}\left(#1\right)}

\global\long\def\supp#1{{\rm supp}\left(#1\right)}%

%

\global\long\def\RR{\mathbb{R}}%

\global\long\def\EE{\mathbb{E}}%

\global\long\def\S{\mathbb{S}}%

\global\long\def\volco#1{{\rm vol_{n-1}}\left(#1\right)}%

\def\ec#1#2{{\EE{[#1\,|\,#2]}}}%

\title{Isobarycentric Inequalities}
\author{Shoni Gilboa}
\address{Department of Mathematics, The Open University of Israel, Raanana 4353701, Israel}

\author{Pazit Haim-Kislev}
\address{Department of Mathematics, Tel Aviv University, Tel Aviv 6997801, Israel}

\author{Boaz Slomka}
\address{Department of Mathematics, The Open University of Israel, Raanana 4353701, Israel}


\begin{document}
\begin{abstract}
    We prove the following isoperimetric type inequality: Given a finite absolutely continuous Borel measure on $\RR^n$, halfspaces have maximal measure among all subsets with prescribed barycenter. As a consequence, we make progress towards a solution to a problem of Henk and Pollehn, which is equivalent to  a Log-Minkowski inequality for a parallelotope and a centered convex body. Our probabilistic approach to the problem also gives rise to several inequalities and conjectures concerning the truncated mean of certain log-concave random variables. 
\end{abstract}

\keywords{convex bodies, barycenters, log-concave functions, trucated means, log-Minkowski inequality.}
\subjclass[2020]{39B62, 51M16, 52A40, 60G50}
\maketitle
 
\section{Introduction}

\subsection{An isobarycentric problem}

Which subset of the unit cube $[-1,1]^n$ in $\RR^n$ has maximal volume among all subsets with prescribed barycenter (center of mass)? 
This question is closely related to a problem posed by Martin Henk, which we discuss in the next section.

More generally, given a finite Borel measure $\mu$ on $\RR^n$, we consider the following problem, which we refer to as the {\em isobarycentric problem for $\mu$}: which subsets of $\RR^n$ maximize $\mu$ among all subsets whose barycenter with respect to $\mu$ is some fixed point $x$ in its support?

When $\mu$ is the uniform measure on the unit Euclidean ball in $\RR^n$ or, say, the standard Gaussian measure on $\RR^n$, it is perhaps expected that the answer should be a halfspace whose barycenter is $x$. Our first result shows that this holds in greater generality, under mild assumptions on the underlying measure. 

Denote the barycenter of a subset $A\sub\RR^n$ with respect to $\mu$  by $b_\mu(A)=\frac{1}{\mu(A)}\int x\d\mu(x)$ and the support of $\mu$ by $\supp{\mu}$. For a set $A\sub \RR^n$ denote by ${\rm int}(A)$  and $\conv{A}$ its interior and  convex hull.
\begin{theorem}\label{thm:main1}
	Let $\mu$ be a finite absolutely continuous Borel measure on $\RR^n$ with bounded first moment. Then for every $b_\mu(\RR^n)\neq x\in{\rm int}(\conv{\supp{\mu})}$ there exists a halfspace $\mathcal{H}\sub\RR^n$ which maximizes $\mu$ among all subsets whose barycenter is $x$. That is,  $b_\mu(\mathcal{H})=x$ and $ \mu(\mathcal{H})=\sup\{\mu(A)\,:\, b_\mu(A)=x\}$. Moreover, if $\partial \mathcal{H} \cap {\rm int}(\supp{\mu}) \neq \emptyset$ then $\mathcal{H}$ is the unique halfspace for which $b_\mu(\mathcal{H}) = x$.
\end{theorem}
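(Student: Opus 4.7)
The plan is to relax the problem to a linear program on $\{f:\RR^n\to[0,1]\}$ (replacing $\mathbf{1}_A$ by an arbitrary $[0,1]$-valued density) and to produce the optimal halfspace as a critical point of the convex dual potential
\[
  g(v) = \int_{\RR^n} \bigl(1-\langle v,y-x\rangle\bigr)_+ \d\mu(y), \qquad v\in\RR^n.
\]
Since $(1-\langle v,y-x\rangle)_+$ is convex in $v$ pointwise, $g$ is convex; it is finite everywhere by the bounded first moment hypothesis.

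The first step is coercivity: for $v=tw$ with $w\in\S^{n-1}$ and $t\to+\infty$, the integrand grows linearly in $t$ on $\{y:\langle w,y-x\rangle<0\}$, and $x\in{\rm int}(\conv{\supp{\mu}})$ guarantees this set has positive $\mu$-mass with a strictly negative first moment in direction $w$; a compactness argument on $\S^{n-1}$ then forces $g(v)\to\infty$ uniformly as $|v|\to\infty$. Consequently $g$ attains its infimum at some $v^*\in\RR^n$, and $v^*\neq 0$ since $\nabla g(0)=-\int(y-x)\d\mu\neq 0$ under $b_\mu(\RR^n)\neq x$.

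The second step is stationarity. Absolute continuity makes $\{\langle v,y-x\rangle=1\}$ a $\mu$-null set, so dominated convergence (justified by the bounded first moment) gives
\[
  \nabla g(v) = -\int_{\{\langle v,y-x\rangle<1\}} (y-x)\d\mu(y),
\]
whence $\int_{\mathcal{H}}(y-x)\d\mu=0$ for the open halfspace $\mathcal{H}:=\{y:\langle v^*,y-x\rangle<1\}$. A separation argument using $x\in{\rm int}(\conv{\supp{\mu}})$ forces $\mu(\mathcal{H})>0$ (otherwise $\supp{\mu}$ would lie in the closed complementary halfspace, whose interior misses $x$, contradicting interiority), so $b_\mu(\mathcal{H})=x$. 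Optimality is weak duality: for any Borel $A$ with $b_\mu(A)=x$,
\[
  \mu(A) = \int \mathbf{1}_A(y)\bigl(1-\langle v^*,y-x\rangle\bigr)\d\mu(y) \le \int \bigl(1-\langle v^*,y-x\rangle\bigr)_+ \d\mu(y) = g(v^*) = \mu(\mathcal{H}),
\]
using $\int_A(y-x)\d\mu=0$ for the first equality and $\int_\mathcal{H}(y-x)\d\mu=0$ for the last.

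For uniqueness under $\partial\mathcal{H}\cap{\rm int}(\supp{\mu})\neq\emptyset$, any halfspace $\mathcal{H}'$ with $b_\mu(\mathcal{H}')=x$ must contain $x$ in its interior (else the defining linear functional of $\mathcal{H}'$ would be strictly below its boundary value $\mu$-a.e.\ on $\mathcal{H}'$, contradicting $b_\mu(\mathcal{H}')=x$), and hence takes the form $\mathcal{H}'=\{\langle v',y-x\rangle<1\}$. By the stationarity formula, $v'$ is a critical point, hence a global minimizer, of the convex function $g$. The plan is then to upgrade convexity of $g$ to strict convexity at $v^*$: perturbations $v^*+tu$ sweep the boundary hyperplane through a neighborhood of the transversality point $p\in\partial\mathcal{H}\cap{\rm int}(\supp{\mu})$, contributing a strictly positive second-order term to $g$ along every direction, thereby forcing $v'=v^*$ and $\mathcal{H}'=\mathcal{H}$. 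The main expected obstacle is making this strict convexity quantitative---extracting from the interiority of $p$ in $\supp{\mu}$ a genuinely $\Omega(t)$ lower bound on the $\mu$-mass of the slab swept around $\partial\mathcal{H}$, uniformly in the perturbation direction $u$.
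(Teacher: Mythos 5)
Your proof takes a genuinely different route from the paper's, and the core of it is correct and, for existence and optimality, arguably cleaner. The paper proceeds in three separate steps -- a direct first-moment comparison showing that \emph{any} halfspace with barycenter $x$ is a maximizer (Lemma \ref{lem::maximizer}), a sign argument on the four cells cut out by two hyperplanes for uniqueness (Lemma \ref{lem::uniqeness}, which needs connected support), and a topological/continuity argument on the polar body for existence (Lemma \ref{lem::exsitence}) -- followed by two approximation arguments to pass to general $\mu$. Your single convex potential $g(v)=\int(1-\langle v,y-x\rangle)_+\d\mu$ packages all of this: coercivity (correctly derived from $x\in{\rm int}(\conv{\supp\mu})$, since otherwise $\supp\mu$ would lie in a closed halfspace with $x$ on its boundary) gives a minimizer $v^*\neq 0$; the first-order condition gives $b_\mu(\mathcal H)=x$; and the weak-duality chain $\mu(A)\le g(v^*)=\mu(\mathcal H)$ is a complete and correct proof of optimality (it even recovers the paper's ``equality only if $\mu(A\triangle\mathcal H)=0$,'' since equality forces $\mathbf{1}_A=\mathbf{1}_{\{\langle v^*,y-x\rangle<1\}}$ $\mu$-a.e.). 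Differentiation under the integral is justified exactly as you say, by absolute continuity (the kink set $\{\langle v,y-x\rangle=1\}$ is $\mu$-null) and the bounded first moment. You thereby avoid the paper's connectivity hypotheses and both approximation steps entirely.

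The one incomplete piece is the uniqueness argument, which you leave as a plan; but the obstacle you anticipate is not really there, and in particular no uniformity in the perturbation direction is needed. If $\mathcal H'=\{\langle v',y-x\rangle<1\}$ also has barycenter $x$, then $v'$ is a second global minimizer of $g$, so $g$ is affine on the single segment $[v^*,v']$; writing $u=v'-v^*$, the map $t\mapsto(1-\langle v^*+tu,y-x\rangle)_+$ is affine on $[0,1]$ for $\mu$-a.e.\ $y$ only if the set $W=\{y:\ 0<\tfrac{1-\langle v^*,y-x\rangle}{\langle u,y-x\rangle}<1\}$ is $\mu$-null. But $W$ is open and nonempty in every neighborhood of the given point $p\in\partial\mathcal H\cap{\rm int}(\supp\mu)$ (a half-neighborhood of $p$ when $\langle u,p-x\rangle\neq0$, an open wedge with apex near $p$ otherwise), and every nonempty open subset of ${\rm int}(\supp\mu)$ has positive $\mu$-measure by definition of the support; moreover $|\langle u,y-x\rangle|$ is bounded below on a positive-measure portion of $W$ near $p$, so the one-sided derivatives of $g$ along the segment have a strictly positive total jump, contradicting affinity. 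With that paragraph added, your argument is a complete and self-contained alternative proof of Theorem \ref{thm:main1}.
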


\subsection{Relation to the Log-Minkowski inequality}

One  application of Theorem \ref{thm:main1} is verifying the Log-Minkowski inequality in certain cases (see Corollary \ref{cor:log-mink} below).
The Log-Minkowski inequality was conjectured by B\"{o}r\"{o}czky,  Lutwak,  Yang,  and Zhang in \cite{BLYZ} and states that  for any two centrally-symmetric convex bodies $K,L\sub\RR^n$ one has 
\begin{align}
		\label{eq:Log-BM}
		\int_{S^{n-1}}\log\frac{h_K(u)}{h_L(u)}\d V_L(u)\geq\frac{{\rm vol}(L)}{n}\log\frac{{\rm vol}(K)}{{\rm vol}(L)},
	\end{align}
where $V_L$ is the cone-volume measure of $L$ and $h_B(u)$ denotes the support function of a convex body $B\sub\RR^n$ in direction $u\in\S^{n-1}$.
	
In the same paper, the Log-Brunn-Minkowski inequality was also conjectured and shown to be equivalent to the Log-Minkowski inequality. If true, these inequalities would make a far-reaching extension to the classical Brunn-Minkowski inequality, see e.g., \cite{AGM}.

So far, the Log-Minkowski inequality has been verified  for all planar convex bodies in \cite{BLYZ}, for the class of unconditional bodies in \cite{Saroglou} and for several other families of convex bodies - some of which are not centrally-symmetric - in \cite{Stancu} and \cite{HP}. 
We remark that  the Log-Minkowski inequality does not hold for arbitrary non-symmetric convex bodies, as pointed out in \cite{BLYZ}.

The following quantitative isobarycentric problem was posed by Martin Henk, using a different normalization, and appears implicitly in \cite{HP}. It is this problem that sparked the research presented here.

Denote the Lebesgue volume of a measurable set $A\sub\RR^{n}$
by $\vol A$, and its barycenter  by $b\left(A\right)=\int_A x \d x/\vol A$.
If $b(A)$ is at the origin, we say that $A$ is \emph{centred}. 

\begin{problema}
	\label{prob:main}
	Is it true that for every non-zero $x=(x_1,\dots,x_n)\in(-1,1)^n$ and each $K\sub[-1,1]^n$ such that $b(K)=x$, one has 
	\begin{equation}
		\vol{K}< 2^n\prod_{i=1}^n\sqrt{1-x_i^2}\,\,?\label{eq:HP}
	\end{equation}
\end{problema}
\noindent Note that for $x=0$, \eqref{eq:HP} becomes equality if $K=[-1,1]^n$. In its original formulation, the set $K$  in Problem \ref{prob:main} was assumed to be convex. However, in view of Theorem \ref{thm:main1}, this additional assumption is redundant. 

As observed in \cite{HP}, an affirmative answer to Problem \ref{prob:main} would imply the Log-Minkowski inequality for a centered convex body and a centered parallelotope. An affirmative answer to this problem was confirmed in \cite{HP}*{Proposition 1.5} for $n\le4$. 

We remark that an affirmative answer to Problem \ref{prob:main} in a given dimension $n_0$ would imply an affirmative answer for all $n\le n_0$ via a simple tensoring argument, mapping $K\sub\RR^n$ to $K\times [-1,1]^{n_0-n}\sub\RR^{n_0}$.

As an application of Theorem \ref{thm:main1}, we resolve Problem \ref{prob:main} in the case where $b(K)$ is on the main diagonal of the cube. Let $\mathbf{1}_n$ denote the vector $(1,1,\ldots,1)$ in $\RR^n$.

\begin{theorem}
	\label{thm:cube_main}
	For every non-zero $-1<m<1$ and each $K \subset [-1,1]^n$ such that $b(K) = m\mathbf{1}_n$, 
	one has $$ \sqrt[n]{\vol{K}}< 2 \sqrt{1-m^2}.$$ 
\end{theorem}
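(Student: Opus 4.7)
My plan is to establish $\vol(K)<(2\sqrt{1-m^2})^n$ by a maximum-entropy argument which, interestingly, bypasses Theorem~\ref{thm:main1}. Given $K\subset[-1,1]^n$ with $b(K)=m\mathbf{1}_n$ and $m\neq 0$, the uniform density $f_K=\mathbf{1}_K/\vol(K)$ on $[-1,1]^n$ has differential entropy $\log\vol(K)$ and mean $m\mathbf{1}_n$. Among all probability densities on $[-1,1]^n$ with mean $m\mathbf{1}_n$, the maximum-entropy density is the product
\[
f^*(x)=\prod_{i=1}^n \frac{e^{\lambda x_i}}{Z(\lambda)},\qquad Z(\lambda):=\int_{-1}^1 e^{\lambda x}\,\d x=\frac{2\sinh\lambda}{\lambda},
\]
where $\lambda=\lambda(m)$ is the unique real solution of $\coth\lambda-1/\lambda=m$. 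A relative-entropy computation gives $h(f^*)=n[\log Z(\lambda)-\lambda m]$, and since the indicator density $f_K$ cannot equal the non-constant exponential tilt $f^*$ when $\lambda\neq 0$, the resulting bound is strict: $\log\vol(K)<n[\log Z(\lambda)-\lambda m]$.

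It remains to prove the one-dimensional inequality $Z(\lambda)e^{-\lambda m}<2\sqrt{1-m^2}$ for $m\neq 0$. Setting $I(m):=\lambda m-\log(\sinh\lambda/\lambda)$ (the Legendre transform of $\log(\sinh\lambda/\lambda)$) and $J(m):=I(m)+\tfrac12\log(1-m^2)$, this is $J(m)>0$. Since $J(0)=0$ and $J$ is even in $m$, it suffices to show $J'(m)>0$ on $(0,1)$. Using $I'(m)=\lambda$, substituting $m=\coth\lambda-1/\lambda$, and simplifying with $\coth^2\lambda-1=\operatorname{csch}^2\lambda$ yields the clean identity
\[
J'(m)=\lambda-\frac{m}{1-m^2}=\frac{\sinh(2\lambda)-2\lambda}{2(1-m^2)\sinh^2\lambda},
\]
which is strictly positive for $\lambda>0$ by the elementary bound $\sinh x>x$ ($x>0$). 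Combining the two ingredients gives the theorem.

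The main technical point is the simplification producing the identity for $J'(m)$: after clearing denominators, the numerator $\lambda(1-m^2)-m$ collapses by the $\coth^2-1=\operatorname{csch}^2$ identity to $(\sinh(2\lambda)/2-\lambda)/\sinh^2\lambda$, whose sign is transparent. An alternative, more geometric route does use Theorem~\ref{thm:main1}: the permutation symmetry of $[-1,1]^n$ together with the uniqueness clause forces the extremal halfspace to have normal $\pm\mathbf{1}_n$, and the theorem then reduces to a truncated-mean estimate for the log-concave sum $X_1+\cdots+X_n$ of independent uniforms on $[-1,1]$. This amounts to an essentially equivalent one-dimensional inequality that again follows from $J(m)>0$, so either plan passes through the same elementary calculus bottleneck.
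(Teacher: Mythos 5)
Your argument is correct, and it takes a genuinely different route from the paper. You bound $\vol{K}$ directly by the Gibbs/maximum-entropy inequality: for $f_K=\mathbf{1}_K/\vol{K}$ and the tilted product density $f^*$, nonnegativity of $D(f_K\|f^*)$ gives $\log\vol{K}=h(f_K)\le n\left[\log Z(\lambda)-\lambda m\right]=-nI(m)+n\log 2$, where $I$ is the Cram\'er rate function of the uniform distribution on $[-1,1]$; the theorem then reduces to the scalar inequality $I(m)\ge-\tfrac12\log(1-m^2)$, which your computation $J'(m)=\lambda-\frac{m}{1-m^2}=\frac{\sinh(2\lambda)-2\lambda}{2(1-m^2)\sinh^2\lambda}>0$ (for $\lambda>0$, using $\coth^2\lambda-1=\operatorname{csch}^2\lambda$) settles; I verified this identity and it is right. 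The paper instead first reduces to halfspace sections via Theorem \ref{thm:main1} and symmetry, recasts the statement as the truncated-mean inequality \eqref{eq:diag} for $X=\frac1n\sum U_i$, and proves it by integrating the differential inequality \eqref{eq:deriv_enough} over most of the range of $t$ (Lemmas \ref{lem:deriv} and \ref{lem:derived_C}, via Chernoff bounds and $\frac1n$-concavity of $p_X$), which only works for $n\ge n_0$ and requires a final tensorization. Your proof is shorter, uniform in $n$, needs neither Theorem \ref{thm:main1} nor any case analysis, and the strictness for $m\neq0$ comes for free from either the entropy step or from $J(m)>0$. What the paper's longer route buys is the finer pointwise information—monotonicity of $\sqrt[n]{p_X(t)}/\sqrt{1-m_X(t)^2}$ and the truncated-mean bounds feeding Conjecture \ref{conj:derivative}—which your entropy bound does not recover, since $2e^{-I(m)}$ overshoots the actual extremal halfspace volume. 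Two minor points worth making explicit if you write this up: the existence and uniqueness of $\lambda(m)$ follows from strict monotonicity of the Langevin function $\coth\lambda-1/\lambda$ from $-1$ to $1$, and $I'(m)=\lambda$ is the standard Legendre-duality fact; neither is a gap.
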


As one might expect, Theorem \ref{thm:cube_main} can be applied in order to obtain the Log-Minkowski inequality for a specific family of centered convex bodies. Indeed, adapting the proof of \cite{HP}*{Proposition 1.5} and invoking Theorem \ref{thm:cube_main} yields:
\begin{corollary} \label{cor:log-mink}
	Suppose $Q=-Q \subset \RR^n$ is a parallelotope whose outer normals are $\{\pm u_1,\ldots,\pm u_n\}$ and $K \subset \RR^n$ is a centered convex body such that $h_K(u_i)/h_K(-u_i)$ 
	is independent of $i$. Then \eqref{eq:Log-BM} holds. 
\end{corollary}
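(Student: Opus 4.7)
The plan is to reduce the corollary to the cube case and then invoke Theorem~\ref{thm:cube_main}. First I would apply the linear map $T$ defined by $T^\top e_i = u_i/h_Q(u_i)$; it sends $Q$ to $[-1,1]^n$ and preserves (up to a common factor $|\det T|$) each side of \eqref{eq:Log-BM}. Indeed, $V_Q$ is concentrated at $\pm u_i$ with mass $\vol{Q}/(2n)$ at each of the $2n$ atoms, and similarly $V_{TQ}$ on $\pm e_i$; since $h_{TK}(\pm e_i) = h_K(\pm u_i)/h_Q(u_i)$, a direct substitution shows that both sides of \eqref{eq:Log-BM} rescale by $|\det T|$, so the inequality for $(K,Q)$ is equivalent to the one for $(TK,[-1,1]^n)$. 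Moreover $TK$ is still centered, and the ratio $h_{TK}(e_i)/h_{TK}(-e_i)$ equals $h_K(u_i)/h_K(-u_i)$, so it is still independent of $i$. Hence we may assume $Q=[-1,1]^n$, $u_i=e_i$, and write $\lambda$ for the common ratio. A direct computation with the cone-volume measure of the cube rewrites \eqref{eq:Log-BM} as
\begin{equation*}
  \prod_{i=1}^n h_K(e_i)\,h_K(-e_i) \;\geq\; \frac{(\vol{K})^2}{4^n}.
\end{equation*}

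Next I would place $K$ inside $[-1,1]^n$ by a translation and a diagonal rescaling. Let $s_i = (h_K(e_i)+h_K(-e_i))/2$; since $K\subset\prod_i[-h_K(-e_i),h_K(e_i)]$, translating by the center of this bounding box and then rescaling the $i$-th coordinate by $1/s_i$ produces a convex body $\tilde K\subset[-1,1]^n$ with $\vol{\tilde K}=\vol{K}/\prod_i s_i$. Because $K$ is centered, the barycenter of $\tilde K$ arises solely from the translation step, and its $i$-th coordinate equals $(1-\lambda)/(1+\lambda)$, the same in every direction. Thus $b(\tilde K)=m\mathbf{1}_n$ with $m=(1-\lambda)/(1+\lambda)\in(-1,1)$, placing $\tilde K$ exactly in the setting of Theorem~\ref{thm:cube_main}.

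Assuming $\lambda\neq 1$, Theorem~\ref{thm:cube_main} gives $\vol{\tilde K}<2^n(1-m^2)^{n/2}$. Substituting the identities $1-m^2=4\lambda/(1+\lambda)^2$ and $\prod_i s_i = 2^{-n}(1+\lambda)^n\prod_i h_K(-e_i)$ yields, after a brief algebraic manipulation, the desired $(\vol{K})^2 < 4^n\prod_i h_K(e_i)h_K(-e_i)$. The boundary case $\lambda=1$ is immediate: then the bounding box is already centered at the origin, so $\vol{K}\leq 2^n\prod_i h_K(e_i)=2^n\sqrt{\prod_i h_K(e_i)h_K(-e_i)}$, which squares to the same bound.

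The main obstacle is the bookkeeping of the first step — tracking how the cone-volume measures, support functions, barycenter condition, and ratio hypothesis all transform compatibly under $T$, and checking that the parallelotope's cone-volume measure really does reduce \eqref{eq:Log-BM} to the clean product inequality above. Once the problem is brought onto the cube with barycenter on the main diagonal, Theorem~\ref{thm:cube_main} does the essential work and the closing calculation is routine.
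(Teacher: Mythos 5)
Your proof is correct and is essentially the argument the paper intends: the paper only says the corollary follows by ``adapting the proof of \cite{HP}*{Proposition 1.5} and invoking Theorem \ref{thm:cube_main}'', and that adaptation is precisely your reduction --- normalize the parallelotope to $[-1,1]^n$, use the atomic cone-volume measure to rewrite \eqref{eq:Log-BM} as $\prod_i h_K(e_i)h_K(-e_i)\geq (\vol{K})^2/4^n$, and then use the constant ratio $\lambda$ to place the rescaled body with barycenter $\frac{1-\lambda}{1+\lambda}\mathbf{1}_n$ on the main diagonal, where Theorem \ref{thm:cube_main} applies. The computations ($1-m^2=4\lambda/(1+\lambda)^2$, $\prod_i s_i=2^{-n}(1+\lambda)^n\prod_i h_K(-e_i)$) check out, and the degenerate case $\lambda=1$ is correctly handled by the bounding-box bound.
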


\subsection{A probabilistic approach}
Let $U_1,\dots,U_n$ be independent uniform random variables on $[-1,1]$ and denote by $U$ the uniformly distributed random vector $(U_1,\dots,U_n)$ in $[-1,1]^n$.
In probabilistic terms, Problem \ref{prob:main} reads as follows: is it true that for every measurable set $K \subset [-1,1]^n$ such that $0 < \vol{K} < 2^n$, it holds that
$$ 
\PP(U\in K) < \prod_{i=1}^n \sqrt{1-\ec{U_i}{{U \in K}}^2}\,\,?
$$
In view of Theorem \ref{thm:main1}, the condition for $U$ to be in $K$ can be simplified. 
In particular, Theorem \ref{thm:cube_main} takes an even simpler form, as every point $m\mathbf{1}_n$ on the main diagonal of the cube $[-1,1]^n$ is the barycenter of a (unique) intersection of the cube with a halfspace orthogonal to $\mathbf{1}_n$. Namely, one can verify that Theorem \ref{thm:cube_main} is equivalent to showing that for $X=\frac{1}{n}\sum_{i=1}^nU_i$ and all $-1<t<1$, one has

\begin{equation} \label{eq:diag} 
\sqrt[n]{\PP(X>t)}<\sqrt{1-m_X(t)^2}, 
\end{equation} 
where 
$$m_X(t):=\ec{X}{X>t}$$ 
is the {\em truncated mean} of $X$.

In addition to providing a convenient framework for the proof of Theorem \ref{thm:cube_main}, the probabilistic formulation of Problem \ref{prob:main} also gives rise to straightforward extensions of it. 
For example, differentiating \eqref{eq:diag} yields the following (stronger) curious inequality (see Lemma \ref{lem:deriv} for the details).
\begin{equation}\label{eq:deriv_enough}
\left(m_X(t)-t\right)\frac{m_X(t)}{1-m_X(t)^2}<\frac{1}{n}.
\end{equation}
In fact, due to numerical experiments, we believe that a stronger inequality holds:

\begin{conjecture}
\label{conj:derivative}
Suppose $X = \frac{1}{n} \sum_{i=1}^n U_i$, where $U_1,\ldots,U_n$ are uniform random variables on $[-1,1]$. Then for every $-1<t<1$,
\begin{equation*}
\left(m_X(t)-t\right)\frac{m_X(t)}{1-m_X(t)^2}<\frac{1}{2n}.
\end{equation*}
\end{conjecture}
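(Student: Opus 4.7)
The plan is to prove the (stronger) statement that $\xi(t) := (m_X(t) - t)\, m_X(t)/(1 - m_X(t)^2)$ is strictly increasing on $(-1, 1)$, together with the sharp limit $\xi(t) \to 1/(2n)$ as $t \to 1^-$; these jointly yield $\xi(t) < 1/(2n)$ on $(-1,1)$. Clearing denominators, the conjecture is equivalent to $(2n+1)\, m_X(t)^2 - 2nt\, m_X(t) - 1 < 0$; and setting $G(t) := \PP(X > t)$ and using the identity $m_X'(t) = (f_X(t)/G(t))(m_X(t) - t)$, it is also equivalent to $F'(t) < 0$ for
\[
F(t) := \log G(t) - n\log\bigl(1 - m_X(t)^2\bigr).
\]
The boundary values $F(-1^+) = 0$ and $F(1^-) = \log\bigl((n+1)^n/(n!\, 4^n)\bigr) < 0$ are consistent with strict monotonicity, and the sharp limit follows from the asymptotics $G(t) \sim \tfrac{n^n}{n!\, 2^n}(1-t)^n$ and $m_X(t) \sim 1 - \tfrac{n}{n+1}(1-t)$ near $t=1$.

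On the tail regime $t \in [1 - 2/n, 1)$, the Irwin-Hall density of $X_n$ reduces to a single power: $f_{X_n}(x) = C_n(1 - x)^{n-1}$ with $C_n = n^n/(2^n(n-1)!)$. This matches the shape of the density $n(1-y)^{n-1}/2^n$ of the random variable $Y := \min(U_1, \ldots, U_n)$. Since $m_X(t)$ is invariant under multiplicative rescaling of the density on $[t, 1]$, one gets $m_{X_n}(t) = m_Y(t) = (1 + nt)/(n+1)$, and hence
\[
\xi(t) = \frac{1 + nt}{n(n + 2 + nt)} \quad \text{for } t \in [1 - 2/n, 1),
\]
which is strictly increasing with $\xi(t) \nearrow 1/(2n)$ as $t \to 1$. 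This handles the regime where the bound is asymptotically tight.

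On the bulk $t \in (-1, 1 - 2/n]$, a direct calculation gives
\[
\xi'(t) = \frac{(f_X/G)(m_X - t)\bigl(2m_X - t(1 + m_X^2)\bigr) - m_X(1 - m_X^2)}{(1 - m_X^2)^2},
\]
so the task reduces to showing the numerator is positive. A natural strategy is induction on $n$ via the decomposition $X_n = \tfrac{n-1}{n}X_{n-1}' + \tfrac{1}{n}U_n$, with $U_n$ independent of $X_{n-1}'$: both $G_n(t)$ and the partial first moment $H_n(t) := \int_t^1 x\, f_{X_n}(x)\,dx$ then satisfy convolution identities, for instance
\[
G_n(t) = \frac{1}{2}\int_{-1}^{1} G_{n-1}\!\left(\frac{nt - u}{n-1}\right) du,
\]
and one would propagate positivity of the numerator through the recursion. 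Alternatively, one could attempt direct piecewise verification using the explicit expansion
\[
f_{X_n}(x) = \frac{n^n}{2^n(n-1)!}\sum_{k=0}^{n}(-1)^k\binom{n}{k}\Bigl(x + 1 - \tfrac{2k}{n}\Bigr)_+^{n-1},
\]
which gives $G_n, H_n$ as explicit polynomials on each subinterval $[1 - 2(k+1)/n,\, 1 - 2k/n]$.

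The main obstacle is the bulk monotonicity. Unlike the weaker bound $1/n$ of \eqref{eq:deriv_enough}, which follows by differentiating Theorem \ref{thm:cube_main} (itself a consequence of a halfspace-comparison argument), the improvement to $1/(2n)$ does not appear to admit a geometric interpretation: it seems to reflect the strong concentration of $X_n$ about its mean (variance $1/(3n)$) relative to generic distributions on $[-1,1]$ with the same barycenter. Turning this heuristic into a rigorous proof will likely require a genuinely new analytic ingredient; the induction is natural but carries substantial bookkeeping through the uniform convolution, and the piecewise verification, while explicit, is only tractable for small $n$.
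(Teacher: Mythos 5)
The statement you are addressing is stated in the paper as a \emph{conjecture}: the paper itself does not prove it, offering only the case $n=1$ (Proposition \ref{prop:n=1}, which in fact covers every centered log-concave density on $[-1,1]$), the weaker bound $C/n$ of Proposition \ref{prop:derived_C}, and the regime-by-regime estimates of Lemma \ref{lem:derived_C}. Your proposal likewise falls short of a proof, but the parts you do carry out are correct. The reduction of the inequality to $F'(t)<0$ for $F=\log G - n\log(1-m_X^2)$ is the same mechanism as Lemma \ref{lem:deriv} (with threshold $1/(2n)$ in place of $1/n$), and your algebra checks out: $(1-m^2)m+(m-t)(1+m^2)=2m-t(1+m^2)$, and on $[1-2/n,1)$ the Irwin--Hall tail is exactly $G(t)=(n(1-t))^n/(2^n n!)$, giving $m_X(t)=(1+nt)/(n+1)$ and $\xi(t)=(1+nt)/\bigl(n(n+2+nt)\bigr)$, which is increasing and tends to $1/(2n)$. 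This is genuine partial progress the paper does not state explicitly: it verifies the conjecture on the tail regime, shows the constant $1/(2n)$ is sharp as $t\to1^-$, and for $n=1$ (where $[1-2/n,1)=[-1,1)$) recovers the uniform case of Proposition \ref{prop:n=1}.

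The gap is the bulk regime $t\in(-1,1-2/n]$ for $n\ge2$, which you acknowledge but do not close; neither the convolution induction via $X_n=\tfrac{n-1}{n}X_{n-1}+\tfrac1n U_n$ nor the piecewise polynomial verification is executed, and no mechanism is given for propagating positivity of the numerator of $\xi'$ through the convolution with $U_n$. A further structural concern: your plan rests on the \emph{strict monotonicity of $\xi$ on all of $(-1,1)$}, which is a strengthening of the conjecture that is itself only supported numerically. If $\xi$ has an interior local maximum somewhere in the bulk (which the paper's estimates do not rule out --- Lemma \ref{lem:derived_C} only bounds $\xi$ by quantities of order $1/n$ with constants exceeding $1/2$ in some regimes), the monotonicity route fails even though the conjectured bound could still hold with the supremum attained only in the limit $t\to1^-$. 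So the proposal should be read as a correct verification on the tail plus a program for the bulk, not a proof; the conjecture remains open for $n\ge2$, exactly as in the paper.
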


We verify the conjecture for $n=1$ and a wider class of random variables. We also obtain  partial results supporting its validity for higher values of $n$ (see Section \ref{sec:derived}). Some of our partial results concerning Conjecture \ref{conj:derivative} are also essential for the proof of Theorem \ref{thm:cube_main}. 

It is  worth mentioning that the properties of truncated means of log-concave random variables, such as in \eqref{eq:diag} and Conjecture  \ref{conj:derivative}, occasionally play an important role in Econometrics and Economics of uncertainity  theory, see e.g., \cite{B96}, \cite{AAV} and references therein. 

\subsection*{Structure of the paper} 
The paper is organized as follows. In Section \ref{sec:extreme} we prove Theorem \ref{thm:main1}. In Section \ref{sec:derived} we discuss the inequality \eqref{eq:deriv_enough} and lay the groundwork for the proof of Theorem  \ref{thm:cube_main}. The proof is completed in Section \ref{sec:iso_cube}, which also contains another general isobarycentric inequality for the cube. In Section \ref{sec:discussion} we conclude the paper with suggestions for some directions for future study.

\subsection*{Acknowledgement}
We are grateful to  Martin Henk for sharing Problem \ref{prob:main} with us and explaining its context,  as well as for his useful comments and suggestions. We thank Alon Nishry for his help with numerical experiments which resulted in the formulation of Conjecture \ref{conj:derivative}. We also thank Shiri Artstein-Avidan, Bo'az Klartag and Yaron Ostrover for fruitful discussions. 
The second named author is partially supported by the European Research Council grant No. 637386.
The third named author is supported by ISF grant 784/20.

 \section{Characterization of extremal centered sets}\label{sec:extreme}

Our goal in this section is to prove Theorem \ref{thm:main1}. 
We start with some assisting lemmas in the case where $\supp{\mu}$ is bounded and connected.
The first lemma proves that if there exists a half-space with the required barycenter then it maximizes the volume out of all the domains with the same barycenter.
We continue with showing that no two distinct half-spaces share the same barycenter, and then prove the existence of the required half-space.
We conclude by approximating all measures by measures whose support is bounded and connected.

Let $\iprod{\cdot}{\cdot}$ denote the standard Euclidean scalar product on $\RR^{n}$. Given a finite absolutely continuous Borel measure $\mu$ on $\RR^n$, a measurable subset $A\sub\RR^n$ and $\theta\in\S^{n-1}$, the coordinate of $b_\mu\left(A\right)$
in direction $\theta$ is given by $b_{\mu,{\theta}}\left(A\right)=\int_{A}\iprod x{\theta}\d \mu/\mu(A)$.

Given a direction $\theta$ and $c\in\RR$, denote the hyperplane ${H}_{\theta,c}=\left\{ x\,:\,\iprod x{\theta}=c\right\}$ and the halfspaces $H^-_{\theta,c}=\left\{ x\,:\,\iprod x{\theta}\le c\right\}$ and $H^+_{\theta,c}=\left\{ x\,:\,\iprod x{\theta}\ge c\right\}$.  For $A,B\sub\RR^n$, denote by $A^c$ and $A\triangle B$ the complement and the symmetric difference operations.

Throughout this section, we shall make an extensive use of the following simple property of the barycenter $b_\mu$: For every two Borel sets $A,B\sub\RR^n$ such that $\mu(A\cap B)=0$ one has
\begin{equation}\label{eq:LTE}
    \mu(A\cup B)b_\mu (A\cup B)=\mu(A)b_\mu (A)+\mu(B)b_\mu (B),
\end{equation}
where our convention is that $\mu(X) b_{\mu}(X) = 0$ whenever $\mu(X) = 0$.
\begin{lemma}
\noindent \label{lem::maximizer}
Let $H_{\theta,c}$ be a hyperplane.
Then for every $A\subset \RR^n$
with $b_{\mu,\theta}\left(A\right) = b_{\mu,\theta}\left(H_{\theta,c}^{-}\right)$, we have $\mu(A) \le \mu(H_{\theta,c}^{-})$, with equality only when $\mu(H_{\theta,c}^- \triangle A)=0$. In particular, if $b_\mu\left(H_{\theta,c}^{-}\right) = b_\mu(A)$ then $\mu(A) \leq \mu\left(H_{\theta,c}^{-}\right)$, with equality only when $\mu(H_{\theta,c}^- \triangle A)=0$.
\end{lemma}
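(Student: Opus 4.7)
The plan is to compare $A$ with $H^-_{\theta,c}$ through the two pieces by which they differ. Let me set $B := A \cap H^+_{\theta,c}$, $C := H^-_{\theta,c} \setminus A$, and $m := b_{\mu,\theta}(H^-_{\theta,c}) = b_{\mu,\theta}(A)$. Then, disjointly up to $\mu$-null sets,
\begin{equation*}
A = (A \cap H^-_{\theta,c}) \cup B, \qquad H^-_{\theta,c} = (A \cap H^-_{\theta,c}) \cup C.
\end{equation*}
Applying \eqref{eq:LTE} to each decomposition, reading off the $\theta$-component, and subtracting so that the common piece $A\cap H^-_{\theta,c}$ cancels, the hypothesis $b_{\mu,\theta}(A) = b_{\mu,\theta}(H^-_{\theta,c}) = m$ should reduce everything to the clean identity
\begin{equation*}
\mu(B)\bigl(b_{\mu,\theta}(B) - m\bigr) = \mu(C)\bigl(b_{\mu,\theta}(C) - m\bigr).
\end{equation*}
Since $\mu(A) - \mu(H^-_{\theta,c}) = \mu(B) - \mu(C)$, the target becomes $\mu(B) \le \mu(C)$.

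Next I would exploit the geometry of halfspaces together with absolute continuity. Because $\mu(H_{\theta,c}) = 0$, the set $B$ lies $\mu$-a.e.\ in the open halfspace $\{\iprod{x}{\theta} > c\}$ and $C$ in $\{\iprod{x}{\theta} < c\}$, giving $b_{\mu,\theta}(B) > c$ whenever $\mu(B) > 0$, $b_{\mu,\theta}(C) < c$ whenever $\mu(C) > 0$, and (in the only nontrivial case $\mu(H^-_{\theta,c}) > 0$) also $m < c$. A short case split on the sign of $b_{\mu,\theta}(C) - m$ then closes things: if $b_{\mu,\theta}(C) \le m$, the right-hand side of the identity is nonpositive while the left-hand side is nonnegative and vanishes only when $\mu(B) = 0$, so $\mu(B) = 0 \le \mu(C)$; if instead $b_{\mu,\theta}(C) > m$, both sides are positive and $b_{\mu,\theta}(B) - m > c - m > b_{\mu,\theta}(C) - m$, so dividing yields $\mu(B) < \mu(C)$ whenever $\mu(B) > 0$.

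For the equality case, I would chase the same inequalities: a tie $\mu(B) = \mu(C)$ is impossible in the second branch (where the ratio is strictly less than $1$) and forces $\mu(C) = 0$ in the first branch, so necessarily $\mu(B) = \mu(C) = 0$, which is exactly $\mu(A \triangle H^-_{\theta,c}) = 0$. The final ``in particular'' assertion is then immediate by applying what has just been proved to the direction $\theta$ normal to the halfspace, since equality of the full barycenter vectors forces equality of every coordinate. I do not anticipate a substantive obstacle here; the whole argument is bookkeeping around \eqref{eq:LTE}, and the only truly delicate point is justifying the strict inequality $m < c$, which rests squarely on the absolute continuity of $\mu$.
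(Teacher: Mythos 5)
Your proposal is correct and follows essentially the same route as the paper's proof: the same decomposition into $A\cap H^{+}_{\theta,c}$ and $H^{-}_{\theta,c}\setminus A$, the same cancellation via \eqref{eq:LTE} yielding the identity $\mu(B)\left(b_{\mu,\theta}(B)-m\right)=\mu(C)\left(b_{\mu,\theta}(C)-m\right)$ (the paper just translates so that $m=0$ first), and the same comparison $b_{\mu,\theta}(C)<c<b_{\mu,\theta}(B)$ to force $\mu(B)\le\mu(C)$ with equality only when both vanish. The only difference is bookkeeping: the paper organizes the degenerate cases by whether $\mu(B)$ or $\mu(C)$ vanishes rather than by the sign of $b_{\mu,\theta}(C)-m$.
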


\begin{proof}
For brevity, in this proof  we set $b_{\theta}(\cdot)=b_{\mu,\theta}(\cdot)$, $H=H_{\theta,c}$ and $H^{\pm}=H^\pm_{\theta,c}$. 
Without loss of generality, we assume that $b_{\theta}(H^-) = b_{\theta}(A)=0$.
From the definition of $H^-$ and the fact that $b_{\theta}(H^-)=0$ we deduce that $c>0$.
On the one hand, we have 
\begin{equation*}
0=b_{\theta}\left(H^-\right)\mu(H^-)=\mu(H^-\cap A)b_{\theta}\left(H^-\cap A\right)+\mu(H^-\cap A^{c})b_{\theta}\left(H^-\cap A^{c}\right).
\end{equation*}
On the other hand, we have
\begin{equation*}
0=b_{\theta}\left(A\right)\mu(A)=\mu(A\cap H^-)b_{\theta}\left(A\cap H^-\right)+\mu(A\cap H^+)b_{\theta}\left(A\cap H^+\right).
\end{equation*}
In particular, it follows that 
\begin{align}
\label{eq::two_sides_eq}
\mu(H^-\cap A^{c})b_{\theta}\left(H^-\cap A^{c}\right)=\mu(A\cap H^+)b_{\theta}\left(A\cap H^+\right).
\end{align}
Note that if $\mu(A\cap H^+) = 0$ then indeed $\mu(A) \leq \mu(H^-)$ with equality if and only if $\mu(A \triangle H^-) = 0$. If $\mu(H^- \cap A^c) = 0$ then $b_{\theta}(H^- \cap A) = b_{\theta}(H^-) = 0$ which gives $\mu(A \cap H^+) = 0$, since otherwise $b_{\theta}(A \cap H^+) = 0$, which contradicts $c>0$.
Hence we may assume that $\mu(A\cap H^+)$, $\mu(H^- \cap A^c) \neq 0$ and $b_{\theta}(A \cap H^+), b_{\theta}(H^- \cap A^{c})$ are well defined.
The inequalities $b_{\theta}\left(H^-\cap A^{c}\right) < c < b_{\theta}\left(H^+\cap A\right)$
imply that
\[
\mu(H^+\cap A)=\frac{b_{\theta}\left(H^-\cap A^{c}\right)}{b_{\theta}\left(H^+\cap A\right)}\mu(H^-\cap A^{c}) < \mu(H^-\cap A^{c})
\]
which gives $\mu(A) < \mu(H^-)$.
\end{proof}
\begin{lemma}
\label{lem::uniqeness}
Suppose $H_{1}=H_{\theta_1,c_1} ,H_{2}=H_{\theta_2,c_2}$
are distinct hyperplanes, both containing interior points of $\supp{\mu}$. Then $b_\mu(H_{1}^{-})\neq b_\mu(H_{2}^{-})$.
\end{lemma}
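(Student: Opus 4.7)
The plan is to argue by contradiction. Assume $b_\mu(H_1^-) = b_\mu(H_2^-)$. Applying Lemma \ref{lem::maximizer} with halfspace $H_1^-$ and set $A = H_2^-$ would give $\mu(H_2^-) \leq \mu(H_1^-)$, and the symmetric application (with the roles of $H_1$ and $H_2$ swapped) would yield the reverse inequality. Thus $\mu(H_1^-) = \mu(H_2^-)$, and the equality clause of Lemma \ref{lem::maximizer} would force $\mu(H_1^- \triangle H_2^-) = 0$. Since $\mu$ is absolutely continuous and every nonempty open subset of ${\rm int}(\supp \mu)$ has positive $\mu$-measure, it would then suffice to exhibit a nonempty open subset of $H_1^- \triangle H_2^-$ lying inside ${\rm int}(\supp \mu)$.

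To produce such an open subset, I would fix a point $p_1 \in H_1 \cap {\rm int}(\supp \mu)$ together with a ball $B(p_1, r) \subset {\rm int}(\supp \mu)$, and then split on the position of $p_1$ relative to $H_2$. If $p_1 \notin H_2$, then after shrinking $r$ the ball $B(p_1, r)$ lies entirely in one of the two open halfspaces determined by $H_2$, say in $\{x : \langle x, \theta_2\rangle > c_2\}$; the open half-ball $B(p_1, r) \cap \{x : \langle x, \theta_1\rangle < c_1\}$ is then a nonempty open subset of $H_1^-$ disjoint from $H_2^-$, hence of $H_1^- \triangle H_2^-$. If instead $p_1 \in H_1 \cap H_2$, then since $H_1 \neq H_2$ the two hyperplanes must be transverse, so the standard four-quadrant picture at $p_1$ yields, in every neighborhood of $p_1$, nonempty open subsets of each wedge defined by a strict sign pattern in the inequalities for $\theta_1$ and $\theta_2$; in particular, the ``mismatched'' wedge $\{x : \langle x, \theta_1\rangle < c_1,\ \langle x, \theta_2\rangle > c_2\}$ provides such an open subset of $H_1^- \triangle H_2^-$.

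In either sub-case the resulting open subset of $H_1^- \triangle H_2^-$ lies inside ${\rm int}(\supp \mu)$, and therefore has positive $\mu$-measure, contradicting $\mu(H_1^- \triangle H_2^-) = 0$. The main difficulty I anticipate is not analytic but geometric: the hypothesis that each hyperplane meets the \emph{interior} of $\supp \mu$ is precisely what makes this local-ball argument produce a set of positive $\mu$-measure, and some case analysis is unavoidable in handling the parallel versus transverse configurations of $H_1$ and $H_2$.
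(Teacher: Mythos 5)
Your proof is correct, but it takes a genuinely different route from the paper's. The paper argues directly: after translating the common barycenter to the origin (so that $c_1,c_2>0$), it decomposes $H_1^-$ and $H_2^-$ over the four wedges $H_1^{\pm}\cap H_2^{\pm}$ via \eqref{eq:LTE}, deduces that $b_\mu(H_1^-\cap H_2^+)$ and $b_\mu(H_1^+\cap H_2^-)$ would have to be positive multiples of one another, and derives a contradiction from the fact that their $\theta_1$- and $\theta_2$-coordinates are forced to opposite sides of $c_1$ and $c_2$; the degenerate sub-cases where a wedge has measure zero are ruled out using the connectedness of $\supp{\mu}$. You instead apply Lemma \ref{lem::maximizer} in both directions to get $\mu(H_1^-)=\mu(H_2^-)$, invoke its equality clause to conclude $\mu(H_1^-\triangle H_2^-)=0$, and then reduce everything to the elementary geometric fact that the symmetric difference of two distinct halfspaces whose boundaries meet ${\rm int}(\supp{\mu})$ contains a nonempty open subset of $\supp{\mu}$, hence has positive measure. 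This is a legitimate and arguably cleaner deduction: it reuses the equality case of Lemma \ref{lem::maximizer} (which the paper proves but does not exploit here), it cleanly separates the measure-theoretic content from the geometry, and notably it does not use connectedness of the support at all, so it proves the lemma in slightly greater generality. What the paper's version buys in exchange is self-containedness within the wedge decomposition. One small point to tighten: in your transverse case you assert that $H_1\neq H_2$ with a common point forces transversality, which fails if the two hyperplanes coincide as point sets but carry opposite orientations ($\theta_2=-\theta_1$, $c_2=-c_1$); that configuration is either excluded by the meaning of ``distinct hyperplanes'' or handled trivially, since then $H_1^-\triangle H_2^-$ is the complement of a single hyperplane and any small ball in ${\rm int}(\supp{\mu})$ minus that hyperplane does the job.
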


\begin{proof}
Assume by contradiction that $u=b_\mu (H_{1}^{-})=b_\mu(H_{2}^{-})$.
By translating everything by $-u$, we may assume that $u=0$. 
Note that our condition on the hyperplanes guarantees that the measure of each half-space is positive and $c_1,c_2>0$.
We have
\[
0=b(H_1^- \cap H_2^- ) \mu(H_1^-\cap H_2^-)+b(H_1^-\cap H_2^+)\mu(H_1^-\cap H_2^+),
\]
\[
0=b(H_1^- \cap H_2^- ) \mu(H_1^-\cap H_2^-)+b(H_1^+\cap H_2^-)\mu(H_1^+\cap H_2^-).
\]
Recall that our convention is that $\mu(X)b(X)=0$ if $\mu(X)=0$.
If $\mu(H_1^- \cap H_2^+) = 0 = \mu(H_1^+ \cap H_2^-)$, we must have $\mu(H_1^+ \cap H_2^+), \mu(H_1^- \cap H_2^-) > 0$, which contradicts the connectedness of $\supp{\mu}$.
If $\mu(H_1^- \cap H_2^+) = 0$, since $b(H_1^+ \cap H_2^-) \geq c_1 > 0$, we get $\mu(H_1^+ \cap H_2^-) = 0$, which is a contradiction.
Same holds for the case $\mu(H_1^+ \cap H_2^-) = 0$.
Therefore, $b(H_1^- \cap H_2^+) =\alpha b(H_1^+ \cap H_2^-)$ for some $\alpha>0$. 
However it is a contradiction since 
$$
b_{\theta_1}(H_1^- \cap H_2^+) < c_1 < b_{\theta_1}(H_1^+ \cap H_2^-) = \frac{1}{\alpha} b_{\theta_1}(H_1^- \cap H_2^+) $$
\begin{equation*}
b_{\theta_2}(H_1^+ \cap H_2^-) < c_2 < b_{\theta_2}(H_1^- \cap H_2^+) = \alpha b_{\theta_2}(H_1^+ \cap H_2^-).  
\qedhere\end{equation*}
\end{proof}

\begin{lemma}
\label{lem::exsitence}
Let $\mu$ be a finite absolutely continuous Borel measure  on $\RR^n$ with bounded first moment. Suppose that the support of $\mu$ is a connected bounded set with non-empty interior $K$. Then for every $x\in{\rm int}(\conv{K})$ there exists a halfspace $\mathcal{H}\sub\RR^n$ such that $b_\mu(\mathcal{H})=x$.
\end{lemma}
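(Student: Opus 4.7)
My plan is to recover the existence from convex analysis rather than from a continuity/topological argument: the required halfspace will be read off from the first-order optimality condition of a smooth convex minimization problem on $\RR^n$. The key device is the functional
\[
\phi(\lambda) := \int_{\RR^n} \left(1 + \iprod{\lambda}{x - y}\right)_+ \d\mu(y), \qquad \lambda \in \RR^n.
\]
For each fixed $y$, the integrand is convex in $\lambda$ and dominated by $1 + |\lambda|\,|x-y|$, which is $\mu$-integrable by the bounded first moment hypothesis. Consequently $\phi$ is finite, convex, and continuous on all of $\RR^n$.

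The heart of the argument is coercivity. For $\omega\in \S^{n-1}$ and $t>0$, restricting the integration to the open halfspace $\{y:\iprod{y-x}{\omega}<0\}$ yields
\[
\phi(t\omega) \ge t\, C(\omega), \qquad C(\omega) := \int_{\{\iprod{y-x}{\omega}<0\}} \iprod{x-y}{\omega}\, \d\mu(y).
\]
Since $x\in{\rm int}(\conv{K})$, any hyperplane through $x$ has points of $K$ strictly on each side; together with the fact that $K={\rm int}(\supp{\mu})$ and the absolute continuity of $\mu$, this forces each open halfspace cut by such a hyperplane to have positive $\mu$-measure on which $\iprod{x-y}{\omega}$ is strictly positive, so $C(\omega) > 0$ for every $\omega \in \S^{n-1}$. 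Dominated convergence (with $|x-y|$ as a $\mu$-integrable majorant) gives continuity of $C$ on the compact sphere, hence a uniform lower bound $C\geq c_0>0$, and therefore coercivity of $\phi$.

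Convexity and coercivity ensure that $\phi$ attains its minimum at some $\lambda^*\in\RR^n$. I would then differentiate under the integral --- justified by the Lipschitz bound together with the fact that the hyperplane $\{y: \iprod{\lambda^*}{y-x}=1\}$ carries zero $\mu$-mass --- to obtain
\[
\nabla \phi(\lambda) = \int_{H_\lambda} (x-y)\, \d\mu(y), \qquad H_\lambda := \{y: \iprod{\lambda}{y-x}\le 1\}.
\]
Vanishing of the gradient at $\lambda^*$ is exactly the identity $b_\mu(H_{\lambda^*})=x$, provided $\mu(H_{\lambda^*})>0$. If $\lambda^*\neq 0$, then $H_{\lambda^*}$ is a proper halfspace of positive $\mu$-measure, for otherwise $\supp{\mu}$ would lie entirely inside the closed halfspace $\{\iprod{\lambda^*}{y-x}\geq 1\}$ at positive distance from $x$, contradicting the interior hypothesis on $x$. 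If $\lambda^*=0$ the gradient formula forces $x=b_\mu(\RR^n)$, and the degenerate halfspace $\RR^n$ itself serves as $\mathcal{H}$. The main subtlety lies in the coercivity step, which is the only place where the interior assumption on $x$ is used in an essential way; the remainder is a routine exchange of differentiation and integration.
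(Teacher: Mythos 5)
Your argument is correct, and it takes a genuinely different route from the paper's. The paper proves existence topologically: it parametrizes halfspaces by points of the polar body $K^{\circ}$, shows the induced barycenter map $f$ is continuous in the symmetric-difference metric, invokes the uniqueness statement (Lemma \ref{lem::uniqeness}) to get injectivity and hence a homeomorphism, and then rules out the segment $[x,b(K)]$ meeting $f(\partial K^{\circ})$. Your variational route --- minimizing the convex, coercive functional $\phi$ and reading the identity $b_\mu(\mathcal H)=x$ off the first-order condition --- bypasses both the delicate continuity analysis and the uniqueness lemma entirely; in particular it never uses connectedness of $\supp{\mu}$, so it establishes a slightly more general statement, and it generalizes more readily beyond bounded supports. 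The individual steps check out: convexity and the integrable Lipschitz majorant $|x-y|$ are fine; the coercivity bound $C(\omega)\geq c_0>0$ correctly exploits $x\in{\rm int}(\conv{K})$ (an open halfspace of $\mu$-measure zero is disjoint from $\supp{\mu}$, which would force $x$ onto a supporting hyperplane of $\conv{K}$); and differentiability at $\lambda^*\neq 0$ follows from dominated convergence because the non-smoothness set is a single hyperplane in $y$, which is $\mu$-null by absolute continuity, while differentiability at $\lambda^*=0$ is immediate since $\phi$ is affine near the origin thanks to the bounded support. One cosmetic point: when $\lambda^*=0$ you offer $\RR^n$ as a ``degenerate halfspace,'' which is not literally a halfspace; since $\supp{\mu}$ is bounded, simply take any genuine halfspace containing the support, whose barycenter is $b_\mu(\RR^n)=x$. (The same degenerate case, corresponding to $y=0$, is implicit in the paper's proof, and is irrelevant to the application in Theorem \ref{thm:main1}, where $x\neq b_\mu(\RR^n)$ is assumed.)
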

We remark that Lemma \ref{lem::exsitence} does not necessarily hold if the measure $\mu$ is not absolutely continuous. A simple example, in the plane, is the one-dimensional Lebesgue measure on the union of two opposite sides of a rectangle. Then, almost no point in the interval connecting the mid-points of those sides is a barycenter of some halfspace.

\begin{proof}[Proof of Lemma \ref{lem::exsitence}]
Assume without loss of generality that $x=0$.
Let 
$$K^{\circ}:=\{y\in{\mathbb R}^n\mid \forall x\in K: \iprod y x\leq 1\}$$
be the polar of $K$. Note that, by our assumptions on $K$, the polar set $K^\circ = (\conv{K})^\circ$, is a compact convex set, containing the origin in its interior. Hence,  the gauge function $||x||_{K^\circ}:=\inf\{r>0: x\in rK^\circ\}$ is well-defined. For every $0\neq y\in K^{\circ}$, consider the hyperplane
$$H_{y}:= \left\{x\in {\mathbb R}^n\mid \iprod y x= \left(1-\lVert y\rVert_{K^{\circ}}\right)\lVert y\rVert_{K^{\circ}}\right\} = H_{\theta, c},$$
where $\theta = \frac{y}{\|y\|}$ and $c = \frac{1}{\|y\|}\left(1-\lVert y\rVert_{K^{\circ}}\right)\lVert y\rVert_{K^{\circ}}$, and consider the map $f:K^{\circ}\to f(K^{\circ})$ defined by
\[
f(y)=\begin{cases}
b(K\cap H_{y}^{-}) & y\neq0\\
b\left(K\right) & y=0
\end{cases}.
\]
We claim that the map $f$ is continuous.
First note that the map
$$
h(y) := \begin{cases}
K\cap H_{y}^{-} & y\neq0\\
K & y=0
\end{cases}
$$
is continuous in the symmetric difference metric.
Indeed, since $K$ is bounded, one can choose a large enough ball $B_R$ such that $K \subset B_R$. 
Let $y \neq 0$. Since $\left(1-\lVert y\rVert_{K^{\circ}}\right)\lVert y\rVert_{K^{\circ}}$ is continuous with respect to $y$, and $y$ is perpendicular to $H_y$, then for $\| y - y' \|$ small enough from the absolute continuity of $\mu$ one gets that $\mu((B_R \cap H^-_y) \Delta (B_R \cap H^-_{y'})) < \varepsilon$, and hence $\mu((K \cap H^-_y) \Delta (K \cap H^-_{y'})) < \varepsilon$. 
For the continuity in the origin, one can consider for each $u \in S^{n-1}$ the point $y := \frac{u}{\|u\|_{K^o}}$, and the map $g_u(\delta) = \mu(H_{\delta y}^+ \cap K)$.
The map $u \mapsto \partial_+ g_u(0)$ is finite for each $u$ and it has a maximum from the compactness of $S^{n-1}$, and hence there exists a constant $C >0$ such that for $\delta$ small enough and for every $y$ that satisfies $\|y\|_{K^o} < \delta$ one has $\mu(H_y^+ \cap K) < C \delta$.
This together with the fact that $(H_y^- \cap K) \Delta K = H_y^+ \cap K$ shows continuity at the origin, and thus we showed that $h$ is continuous in the symmetric difference metric. As $f$ is the composition of the map $b$ with $h$
we are left with checking that the barycenter is continuous in the symmetric difference metric, which can be verified by using the following identity derived from \eqref{eq:LTE}.
\begin{align*} 
(b(K_1) - b(K_2)) \mu(K_1) \mu(K_2) =& \left(b(K_1 \cap K_2) \mu(K_1 \cap K_2) -b(K_2 \setminus K_1) \mu(K_1)\right)\mu(K_2 \setminus K_1) \\
&-\left(b(K_1 \cap K_2) \mu(K_1 \cap K_2)-b(K_1 \setminus K_2) \mu(K_2)\right) \mu(K_1 \setminus K_2).
\end{align*}
Therefore, $f$ is continuous, as claimed.

In addition, for every $y \neq 0$ one has $H_y \cap {\rm int}(\conv{K}) \neq \emptyset$ and hence $H_y \cap {\rm int}(K) \neq \emptyset$. By Lemma \ref{lem::uniqeness}
$f$ is a bijection, and hence a homeomorphism.
We deduce that $f(\partial K^{\circ})=\partial f(K^{\circ})$.
Note that since $f(\partial K^{\circ})$ is the set of barycenters $b\left(K\cap H^{-}\right)$
where the hyperplane $H$ goes through the origin, we get $b(K)\in{\rm int}(f(K^{\circ}))$.

We wish to prove that $0\in f(K^{\circ})$. To that end, it is sufficient
to prove that $[0,b(K)]\cap\partial f(K^{\circ})=\emptyset$, i.e., that $[0,b(K)]\cap f(\partial K^{\circ})=\emptyset$. Assume by contradiction
$\lambda b(K)\in[0,b(K)]\cap f(\partial K^{\circ})$.
 By the definition of $f$, there exists a $y\in \partial K^{\circ}$ (i.e., $\lVert y\rVert_{K^{\circ}}=1$) for which $\lambda b(K)=b(K\cap H^{-}_{y})$.
By \eqref{eq:LTE}, we have
\[
\lambda b(K)\mu(K\cap H^{-}_{y})+b(K\cap H^{+}_{y})\mu(K\cap H^{+}_{y})=b(K)\mu(K),
\]
and hence
\[
b(K\cap H^{+}_{y})\mu(K\cap H^{+}_{y})=b(K)\left[(1-\lambda)\mu(K\cap H^{-}_{y})+\mu(K\cap H^{+}_{y})\right].
\]
Since $\lambda<1$ it follows that $b(K\cap H^{+}_{y})=\alpha b(K)$ for
some $\alpha>0$. Since $b(K\cap H^{-}_{y})=\lambda b(K),\lambda>0$ and
$H_{y}$ passes through the origin, we get a contradiction.
\end{proof}

\begin{proof}[Proof of Theorem \ref{thm:main1}]

First one can assume that $\supp{\mu} = \RR^n$, otherwise set $\mu_\epsilon = \mu + \epsilon \mu_0$, where $\mu_0$ is the Gaussian measure. Note that for every $\delta>0$ there exists $\epsilon(\delta)$ small enough such that for every $A \subset \RR^n$, $|\mu_{\epsilon(\delta)}(A) - \mu(A)| < \delta$ and $\|b_{\mu_{\epsilon(\delta)}} (A) - b_\mu(A) \| < \delta$. Now for each $\delta$, assume that the theorem holds for $\mu_{\epsilon(\delta)}$. Choose a halfspace $\mathcal{H}_{\epsilon(\delta)}$ such that $b_{\mu_{\epsilon(\delta)}}(\mathcal{H}_{\epsilon(\delta)}) = x$ and $\mu_{\epsilon(\delta)}(\mathcal{H}_{\epsilon(\delta)}) = \sup\{\mu_{\epsilon(\delta)}(A)\,:\, b_{\mu_{\epsilon(\delta)}}(A)=x\}$. Now when taking $\delta \to 0$, we get that $\mathcal{H}_{\epsilon(\delta)} \xrightarrow{\delta \to 0} \mathcal{H}$ converges maybe after passing to a subsequence. Indeed, since $x \neq b_\mu(\RR^n)$, there exists some radius $R$ so that the distance of $\partial \mathcal{H}_{\epsilon(\delta)}$ from the origin is smaller than $R$, and the space of hyperplanes with distance from the origin smaller or equal than $R$ is compact. In addition, $x = b_{\mu_{\epsilon(\delta)}}(\mathcal{H}_{\epsilon(\delta)}) \to b_\mu(\mathcal{H})$ shows that indeed $b_\mu(\mathcal{H}) = x$. Finally for $A$ with $b_\mu(A) = x$, one can choose $A_{\epsilon(\delta)}$ with $b_{\mu_{\epsilon(\delta)}}(A_{\epsilon(\delta)}) = x$ and $\mu_{\epsilon(\delta)}(A_{\epsilon(\delta)}) \to \mu(A)$. One has $\mu(\mathcal{H}) = \lim \mu_{\epsilon(\delta)}(\mathcal{H}_{\epsilon(\delta)}) \geq \lim \mu_{\epsilon(\delta)}(A_{\epsilon(\delta)}) = \mu(A)$. This proves that indeed one is able to assume $\supp{\mu} = \RR^n$.
Now for every $R > 0$ such that $x \in B(R)$, the measure $\mu$ restricted to a ball of radius $R$ satisfy the conditions for Lemma \ref{lem::maximizer} and Lemma \ref{lem::exsitence} and hence there exists a hyperplane $\mathcal{H}_R$ such that $\mu_{R}(\mathcal{H}_{R}) = \sup\{\mu_{R}(A)\,:\, b_{\mu_{R}}(A)=x\}$. Note that since $\mu$ is bounded, for every $\epsilon>0$ there exists a large enough $R$ so that $\mu$ and $\mu_R$ agree up to $\epsilon$, so we can repeat the same arguments as before to complete the proof. 
\end{proof}

\section{A derived inequality}\label{sec:derived}

In this section we  discuss the derived inequality \eqref{eq:deriv_enough}. First, we  explain how it is related to Problem \ref{prob:main}. Second, in the case $n=1$, we establish a stronger inequality, namely Conjecture  \ref{conj:derivative},  for any log-concave distribution with mean $0$. Third, we prove \eqref{eq:deriv_enough} up to a universal constant, for all $n$. 

Let $X$ be a random variable which takes values in $[-1,1]$ such that for every $-1\leq t<1$, its tail $p_X(t):={\rm Pr}(X>t)$ is positive.
For any $-1\leq t< 1$ denote
$$m_X(t):=\ec{X}{X>t}.$$
Obviously, $t\leq m_X(t)\leq 1$ for every $-1\leq t<1$.
The function $m_X$ is clearly increasing in the interval $[-1,1)$; in particular, $m_X(t)\geq m_X(-1)=\EE[X]$ for every $-1\leq t<1$. 
Furthermore, for every $-1\leq t<1$ it holds that
\begin{equation}\label{eq:m}
m_X(t)=t+\frac{1}{p_X(t)}\int_t^1 p_X(x)\d x.
\end{equation} 

\subsection{Differentiating Problem \ref{prob:main}}
Let us  make the connection between \eqref{eq:diag} and \eqref{eq:deriv_enough}: 
The following lemma  shows that  inequality \eqref{eq:deriv_enough} is equivalent to the monotonicity of the function $\sqrt[n]{p_X(t)}/\sqrt{1-m_X(t)^2}$, from which \eqref{eq:diag} would immediately follow.

The lemma is formulated in a way that will allow us to prove \eqref{eq:diag} for all $t$ in the interval $(-1,1)$, although we are unable to establish \eqref{eq:deriv_enough} in the entire interval $(-1,1)$.

\begin{lemma}\label{lem:deriv}
Assume that $X$ is continuous with positive density in $(-1,1)$, and let $-1\leq\alpha<\beta\leq 1$. Suppose that $\sqrt[n]{p_X(\alpha)}\leq \sqrt{1-m_X(\alpha)^2}$ and \eqref{eq:deriv_enough} holds for every $\alpha<t<\beta$,  that is
\begin{equation*}
\left(m_X(t)-t\right)\frac{m_X(t)}{1-m_X(t)^2}<\frac{1}{n}.
\end{equation*}
Then \eqref{eq:diag} holds for every $\alpha< t<\beta$, that is
$$\sqrt[n]{p_X(t)}<\sqrt{1-m_X(t)^2}.$$ 
\end{lemma}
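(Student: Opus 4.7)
The plan is to interpret the claim as a monotonicity statement for the single quantity
$$G(t) := \frac{1}{n}\log p_X(t) - \frac{1}{2}\log(1 - m_X(t)^2),$$
whose negativity is equivalent to the desired inequality \eqref{eq:diag}. The hypothesis on the endpoint becomes $G(\alpha) \le 0$, so once I establish $G'(t) < 0$ on $(\alpha, \beta)$, the Fundamental Theorem of Calculus will upgrade this to the strict inequality $G(t) < 0$ on the whole interval.

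First I would compute $m_X'$. Denoting the positive density of $X$ on $(-1,1)$ by $f_X$, one has $p_X'(t) = -f_X(t)$ and, differentiating \eqref{eq:m} (or, equivalently, the identity $m_X(t) p_X(t) = \int_t^1 x f_X(x)\,\d x$), one obtains
$$m_X'(t) = (m_X(t) - t)\,\frac{f_X(t)}{p_X(t)}.$$
Substituting into the logarithmic derivative of $G$ yields
$$G'(t) = \frac{f_X(t)}{p_X(t)}\left[\,\frac{m_X(t)\,(m_X(t)-t)}{1 - m_X(t)^2} - \frac{1}{n}\,\right].$$
The hypothesis \eqref{eq:deriv_enough} is precisely the assertion that the bracketed expression is negative on $(\alpha,\beta)$, and $f_X(t)/p_X(t) > 0$; hence $G'(t) < 0$ throughout $(\alpha,\beta)$.

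To conclude, observe that $G$ is continuous on $[\alpha,\beta)$: the tail $p_X$ is continuous and positive there, and $m_X(t) \in (-1,1)$ so that $1 - m_X(t)^2 > 0$. For any $t \in (\alpha, \beta)$ the Fundamental Theorem of Calculus then gives
$$G(t) = G(\alpha) + \int_\alpha^t G'(s)\,\d s \le \int_\alpha^t G'(s)\,\d s < 0,$$
the last inequality being strict because $G'$ is continuous and pointwise strictly negative on the interval $(\alpha, t)$ of positive length.

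I do not anticipate a genuine obstacle: once the right monotone quantity $G$ is identified, the argument reduces to a one-line derivative computation. The only minor delicacy is that the input inequality $G(\alpha) \le 0$ is not strict, and this gap is closed precisely by the strict negativity of the integral of $G'$ on any nondegenerate subinterval of $(\alpha, \beta)$.
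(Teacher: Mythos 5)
Your proof is correct and follows essentially the same route as the paper: both define the function $\frac{1}{n}\ln p_X(t)-\frac{1}{2}\ln\left(1-m_X(t)^2\right)$, use the identity $m_X'(t)=-(m_X(t)-t)\,p_X'(t)/p_X(t)$ to express its derivative as $\frac{p_X'(t)}{p_X(t)}\left(\frac{1}{n}-(m_X(t)-t)\frac{m_X(t)}{1-m_X(t)^2}\right)$, and conclude by monotonicity from the endpoint hypothesis. In fact your sign bookkeeping is the correct one (the function is strictly decreasing and $G(\alpha)\le 0$), whereas the paper's written proof contains a sign slip, asserting $f'>0$ and $f(\alpha)\ge 0$.
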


\begin{proof}
First note that $m_X(t)p_X(t)=t\, p_X(t)+\int_t^1 p_X(x)$ for any $-1<t<1$, by \eqref{eq:m}. Therefore,
for any $-1<t<1$ it holds that $m_X'(t)p_X(t)+m_X(t)p_X'(t)=tp_X'(t)$, i.e.,
\begin{equation}\label{eq:log_deriv}
\frac{m_X'(t)}{m_X(t)-t}=-\frac{p_X'(t)}{p_X(t)}.
\end{equation}For any $-1<t<1$, let
\begin{equation*}
f(t):=\ln\sqrt[n]{p_X(t)}-\ln\sqrt{1-m_X(t)^2}=\frac{1}{n}\ln p_X(t)-\frac{1}{2}\ln\left(1-m_X(t)^2\right).
\end{equation*} 
Then, for every $-1<t<1$, by using \eqref{eq:log_deriv},
\begin{equation*}
f'(t)=\frac{p_X'(t)}{n p_X(t)}+\frac{m_X'(t)m_X(t)}{1-m_X(t)^2}
=\frac{p_X'(t)}{p_X(t)}\left(\frac{1}{n}-\left(m_X(t)-t\right)\frac{m_X(t)}{1-m_X(t)^2}\right).
\end{equation*}
Since $p_X(t)>0$ and $p_X'(t)<0$ for any $-1<t<1$, it follows that $f'(t)>0$ for every $\alpha<t<\beta$. Hence, $f(t)>f(\alpha)\geq 0$ for every $\alpha< t<\beta$, and the result follows.
\end{proof}
 
\begin{remark}\label{rem:geometric}
A geometric way to derive \eqref{eq:deriv_enough} is  described as follows, without going into technical details: Denote $B=\left[-1,1\right]^{n}$. Let 
$H_{t}=\left\{ x\in\RR^{n}\mid\iprod {\mathbf{1}_n} x=\sqrt{n}t\right\} $,
$B_{t}=B\cap H_{t}^{+}$ and $b\left(B_{t}\right)=\gamma\mathbf{1}_n$.
Then 
\begin{align*}
\vol{B_{t}}= & \int_{t}^{1}\volco{B\cap H_{s}}\d s\\
b\left(B_{t}\right)\vol{B_{t}}= & \int_{t}^{1}b\left(B\cap H_{s}\right)\volco{B\cap H_{s}}\d s.
\end{align*}
By Theorem \ref{thm:main1} (and some symmetry considerations), it follows that for each $0<\gamma<1$ there exists $t\left(\gamma\right)$
such that $b\left(B_{t\left(\gamma\right)}\right)=\gamma\mathbf{1}_n$.
Note that for $\theta =\frac{1}{\sqrt{n}}\mathbf{1}_n$, 
we have $b_{\theta}\left(B_{t\left(\gamma\right)}\right)=\sqrt{n}\gamma$. Set 
\[
F\left(\gamma,t\right)=\sqrt{n}\gamma\vol{B_{t}}-\int_{t}^{1}b_\theta\left(B\cap H_{s}\right)\volco{B\cap H_{s}}\d s.
\]
Since $F\left(\gamma\right), t\left(\gamma\right))=0$, one can compute $dt/d\gamma$ (using the implicit function theorem) and show that the function 
$$
g:\gamma\mapsto\frac{\sqrt[n]{\vol{B_{t\left(\gamma\right)}}}}{2\sqrt{1-\left(\sqrt{n}\gamma\right)^{2}}}$$ 
is decreasing in $\gamma$ if and only  if  
\[
\gamma n^{2}\le\frac{\sqrt{n}\left(1-n\gamma^{2}\right)}{\sqrt{n}\gamma-b_\theta\left(B\cap H_{t\left(\gamma\right)}\right)}.
\]
Noting that $b_\theta\left(B\cap H_{t\left(\gamma\right)}\right)=t\left(\gamma\right)$
and recalling that $m(\gamma):=\sqrt{n}\gamma=b_\theta\left(B_{t\left(\gamma\right)}\right)$ one recovers \eqref{eq:deriv_enough}. Also note that since $g(0)=1$, \eqref{eq:deriv_enough} implies the validity of \eqref{eq:diag}.

\end{remark}   
    
\begin{remark}\label{rem:m_increasing}
Note that for every $-1\leq t<1$, the function 
$$m\mapsto (m-t)\frac{m}{1-m^2}$$ 
is increasing in the interval $[\max\{t,0\},1)$. Hence any upper bound on $\left(m_X(t)-t\right)\frac{m_X(t)}{1-m_X(t)^2}$ translates to an upper bound on $m_X(t)$ and vice-versa.

\end{remark}
\subsection{The case \texorpdfstring{$n=1$}{n=1}}\label{sec:n=1}

Recall that a plausible stronger form of \eqref{eq:deriv_enough} is given in Conjecture \ref{conj:derivative}. In this section we prove this conjecture for $n=1$. 

\begin{lemma}\label{lem:m_vs_t_half}
Suppose $X$ is a random variable with log-concave density $f:[-1,1]\to\RR$ such that $\EE[X]=0$. Then $m_X(t)\le \frac{1+t}{2}$ for all $t\in[-1,1]$.
\end{lemma}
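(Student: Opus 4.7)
The plan is to analyze the function
\[
\phi(t)\;:=\;\frac{1+t}{2}\,p_X(t)\;-\;\int_t^1 xf(x)\,dx,
\]
whose non-negativity is exactly equivalent to $m_X(t)\le(1+t)/2$. The mean-zero hypothesis enters precisely at the left endpoint: $\phi(-1) = 0\cdot 1 - \EE[X] = 0$, while $\phi(1)=0$ is trivial. A direct computation gives
\[
\phi'(t)\;=\;\tfrac12\bigl(p_X(t)-(1-t)f(t)\bigr),\qquad \phi''(t)\;=\;-\tfrac12(1-t)f'(t),
\]
so in particular $\phi'(1)=0$.

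Log-concavity enters only through the unimodality of $f$: there exists $x^*\in[-1,1]$ such that $f$ is non-decreasing on $[-1,x^*]$ and non-increasing on $[x^*,1]$. From the formula for $\phi''$ this means that $\phi$ is concave on $[-1,x^*]$ and convex on $[x^*,1]$.

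On $[x^*,1]$, convexity together with $\phi(1)=\phi'(1)=0$ forces $\phi(t)\ge \phi(1)+\phi'(1)(t-1) = 0$; in particular $\phi(x^*)\ge 0$. On $[-1,x^*]$, concavity combined with $\phi(-1)=0$ and $\phi(x^*)\ge 0$ (inherited from the previous step) forces $\phi$ to dominate its chord between these endpoints, which is non-negative. Hence $\phi\ge 0$ on all of $[-1,1]$, as desired.

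The argument is essentially a clean calculus split at the mode and I do not foresee serious obstacles. The only technical point worth care is that a general log-concave density need not be differentiable, so one should derive the monotonicity properties of $\phi'$ on each side of $x^*$ directly from the monotonicity of $f$ rather than from the sign of $f'$. For instance, on $[-1,x^*]$ the fact that $\psi(t) := p_X(t)-(1-t)f(t)$ is non-increasing reduces, for $t<s$ there, to $\int_t^s f\,dx \ge (1-t)f(t)-(1-s)f(s)$, and both sides are bounded by $(s-t)f(t)$ using $f$ non-decreasing; the companion fact $\psi\le 0$ on $[x^*,1]$ is the even simpler $\int_t^1 f\le (1-t)f(t)$ for $f$ non-increasing on $[t,1]$. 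With these in hand the gluing at $x^*$ (in that order: right half first) completes the proof.
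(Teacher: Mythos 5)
Your proof is correct and is essentially the paper's argument with the opposite sign convention: your $\phi$ is the negative of the paper's $F(t)=\int_t^1 xf\,dx-\frac{1+t}{2}\int_t^1 f\,dx$, you compute the same first and second derivatives, and you split at the mode exactly as the paper does (the paper treats the monotone case separately and handles non-smooth $f$ by approximation, whereas your chord/tangent formulation absorbs both, but this is a cosmetic difference).
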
 

\begin{proof} 
We have $m_X(t) =\int_t^1 xf(x)\d x / \int_t^1 f(x)\d x$. Define 
$$F(t)=\int_t^1 xf(x)\d x -\frac{1+t}{2}\int_t^1 f(x)\d x.$$
We need to show that $F(t)\le 0$ for all $t\in[-1,1]$. We have $F(1)=0$ and also $F(-1)=0$ (since $\EE[X]=0$). 
By approximation, we may assume that $f$ is twice differentiable on $(-1,1)$. Note that  for any $-1<t<1$,
\begin{equation}\label{eq:Ftag}
F'(t) =-tf(t) -\frac{1}{2}\int_t^1f(x)\d x +\frac{1+t}{2}f(t)=\frac{1-t}{2}f(t)-\frac{1}{2}\int_t^1 f(x)\d x
\end{equation}
and 
\begin{equation}\label{eq:Ftagaim}
    F''(t)= -\frac{1}{2}f(t)+\frac{1-t}{2}f'(t)+\frac{1}{2}f(t)=\frac{1-t}{2}f'(t).
\end{equation}

Suppose that $f$ is monotone on $(-1,1)$. If $f$ is constant then the claim trivially holds. Otherwise, it follows by \eqref{eq:Ftag} that $F$ is monotone and non-constant, a contradiction to  the fact that $F(-1)=F(1)=0$. Hence, $f$ is not monotone on $(-1,1)$.

Since $\log f$ is concave, there exist $-1<p\le q<1$ such that $f$ is strictly increasing on $(-1,p)$, strictly decreasing on $(q,1)$ and constant on $[p,q]$. Therefore, on the one hand, by \eqref{eq:Ftag},  $F'\ge 0$ on $[p,1)$. As $F(1)=0$, it follows that  $F\le0$ on $[p,1)$. On the other hand, by \eqref{eq:Ftagaim},  $F''>0$ on $(-1,p)$ and so $F$ has no local maxima in $(-1,p)$. As $F(-1)=0$ and $F(p)\le 0$, it follows that $F\le 0$ on $(-1,p)$ as well.
\end{proof}

\begin{proposition}\label{prop:n=1}
Suppose $X$ is a random variable with log-concave density $f:[-1,1]\to\RR$ such that $\EE[X]=0$. Then
$$m_X(t)-t\le \frac{1-m_X(t)^2}{2m_X(t)}.$$
Moreover, the uniform distribution is the extremizing distribution in this case.
\end{proposition}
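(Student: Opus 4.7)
The plan is to reduce the desired inequality to Lemma \ref{lem:m_vs_t_half} via the elementary monotonicity observed in Remark \ref{rem:m_increasing}. First I would rearrange the conclusion into the form
\begin{equation*}
\left(m_X(t)-t\right)\frac{m_X(t)}{1-m_X(t)^2}\le\frac{1}{2},
\end{equation*}
which is equivalent to the stated inequality whenever $m_X(t)>0$. The boundary case $t=-1$ is trivial, since then $m_X(-1)=\EE[X]=0$ makes the right-hand side of the original inequality infinite. For $-1<t<1$, positivity of the density on $(-1,1)$ together with the relation $m_X'(t)=\frac{f(t)}{p_X(t)}(m_X(t)-t)$ shows that $m_X$ is strictly increasing, so $m_X(t)>m_X(-1)=0$ and the rearrangement is valid.

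Next I would verify that $\max\{t,0\}\le m_X(t)\le (1+t)/2$: the lower bound follows since $m_X(t)\ge t$ always and $m_X(t)\ge m_X(-1)=0$, while the upper bound is exactly Lemma \ref{lem:m_vs_t_half}. Since $(1+t)/2\in[\max\{t,0\},1)$ for $t\in(-1,1)$, the monotonicity from Remark \ref{rem:m_increasing} of the map $m\mapsto (m-t)\frac{m}{1-m^2}$ on $[\max\{t,0\},1)$ applies, giving
\begin{equation*}
\left(m_X(t)-t\right)\frac{m_X(t)}{1-m_X(t)^2}\le \left(\frac{1+t}{2}-t\right)\frac{(1+t)/2}{1-\bigl((1+t)/2\bigr)^2}=\frac{1+t}{3+t}\le \frac{1}{2},
\end{equation*}
where the last inequality is equivalent to $t\le 1$. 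Rearranging recovers the claimed bound on $m_X(t)-t$.

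For the extremality statement, I would directly compute $m_X(t)$ for the uniform distribution on $[-1,1]$, getting $m_X(t)=(1+t)/2$, which saturates Lemma \ref{lem:m_vs_t_half} identically in $t$; thus the uniform distribution is the extremizer of the supporting bound that drives the whole argument. I do not anticipate any real obstacle in executing this plan, as the analytic content is entirely packaged into Lemma \ref{lem:m_vs_t_half}; the only mild subtlety is ensuring that $m_X(t)$ lies in the range where the auxiliary function $m\mapsto(m-t)m/(1-m^2)$ is monotone, which is handled by the simple bookkeeping above.
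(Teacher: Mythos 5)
Your proof is correct and follows essentially the same route as the paper: both arguments reduce everything to the bound $m_X(t)\le(1+t)/2$ from Lemma \ref{lem:m_vs_t_half} and finish with an elementary monotonicity step. The only cosmetic difference is that you invoke the increasing map $m\mapsto(m-t)\frac{m}{1-m^2}$ of Remark \ref{rem:m_increasing}, whereas the paper uses the decreasing function $g(x)=\frac{1-x^2}{2x}$; these are interchangeable here.
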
 

\begin{proof}
Let $U$ be a uniformly distributed random variable on $[-1,1]$. 
Note that $m_U(t)=(1+t)/2$ and hence 
$$m_U(t)-t=\frac{1-t}{2}\le\frac{1-((1+t)/2)^2}{2(1+t)/2}=\frac{1-m_U(t)^2}{2m_U(t)}.$$
By Lemma \ref{lem:m_vs_t_half}, we also have $0\leq m_X(t)\leq m_U(t)$ for all $t\in[-1,1]$. Moreover, since the function $g(x)=\frac{1-x^2}{2x}$ is decreasing on $(0,1]$, we have
$$ m_X(t)-t\le m_U(t)-t\le g(m_U(t))\le g(m_X(t))=\frac{1-m_X(t)^2}{2m_X(t)},$$
as claimed.
\end{proof}
Notice that the proofs of the above lemma and proposition remain valid under weaker assumptions. For example, it is also valid if $f$ is unimodal, i.e., for some $p$ in $(-1,1)$, $f$ is increasing  on $(-1,p)$ and decreasing on $(p,1)$.

Also note that, contrary to Proposition \ref{prop:n=1}, Conjecture \ref{conj:derivative} fails for $n\ge2$ if one only assumes that the density of $X$ is some log-concave function supported on $[-1,1]$ and that $X$ has the same expectation and variance as that of the average of $n$ uniform random variables on $[-1,1]$. For example, one can verify that this is case for the appropriate truncated normal distribution. In fact, for this distribution, also \eqref{eq:deriv_enough} is violated for $n\ge3$.  
It is not clear to us, what is the ``right" family of distribution for which Conjecture \ref{conj:derivative} or \eqref{eq:deriv_enough} might hold. One possible family is that of $\frac{1}{n-1}$-concave distributions. 

\subsection{Proof of \eqref{eq:deriv_enough} up to a universal constant}\label{sec:derived_C}

Let $U_1,\ldots,U_n$ be independent uniform random variables on $[-1,1]$ and let $X:=\frac{1}{n}\sum_{i=1}^n U_i$.
In this section we prove the following relaxed version of \eqref{eq:deriv_enough}.

\begin{proposition}\label{prop:derived_C}
There is a universal constant $C$ such that for every $-1 < t< 1$,
\begin{equation*}
 \left(m_X(t)-t\right)\frac{m_X(t)}{1-m_X(t)^2}<\frac{C}{n}.
\end{equation*}
\end{proposition}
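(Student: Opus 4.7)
The proof proceeds by case analysis on $t\in(-1,1)$, using three main tools: (i) an explicit simplex computation at the boundary of the support; (ii) a Cauchy--Schwarz estimate based on $\EE[X^2]=1/(3n)$; and (iii) log-concavity of $f_X$, combined with the standard lower bound $f_X(0)\ge 1/(2\sqrt{3}\,\sigma) = \sqrt n/2$ valid for every log-concave symmetric density of variance $\sigma^2 = 1/(3n)$.

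\textbf{Tip, $t\ge 1-2/n$.} The half-space $\{\sum_i U_i > nt\}$ intersected with $[-1,1]^n$ is a simplex when $nt\ge n-2$, yielding the closed form $p_X(t) = \frac{n^n}{n!}\bigl(\tfrac{1-t}{2}\bigr)^n$. A short direct calculation then gives $m_X(t)-t = (1-t)/(n+1)$, and substituting into $\Phi(t):=(m_X(t)-t)m_X(t)/(1-m_X(t)^2)$ produces $\Phi(t) = \frac{nt+1}{n(n(1+t)+2)} \le \frac{1}{2n}$.

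\textbf{Range $t<1-2/n$.} Here monotonicity of $m_X$ gives $m_X(t)\le (n-1)/(n+1)$, so $1-m_X(t)^2 \ge 4n/(n+1)^2$. For $t\ge 0$, using $(x-t)^2\le x^2$ on $[t,1]$, Cauchy--Schwarz and symmetry of $X$ yield
\[
p_X(t)\,m_X(t)(m_X(t)-t) \le \int_t^1 x(x-t) f_X(x)\,dx \le \int_0^1 x^2 f_X(x)\,dx = \tfrac{1}{6n}.
\]
If $p_X(t)\ge 1/3$, this alone gives $\Phi(t)\le C/n$ (indeed $m_X(t)^2\le 1/(2n)$ so $1-m_X(t)^2\ge 1/2$). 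Otherwise Hoeffding forces $t\gtrsim 1/\sqrt n$, and we invoke log-concavity of $p_X$: the hazard rate $h=f_X/p_X$ is increasing, so $m_X(t)-t \le 1/h(t) \le 1/h(0) \le 1/\sqrt n$ (using $h(0)=2f_X(0)\ge\sqrt n$). A further sub-case split on whether $m_X(t)\le 1/\sqrt n$ or not then closes the argument. For $t\le 0$ we combine $p_X(t)\ge 1/2$, $m_X(t)\le m_X(0)=\EE|X|\le 1/\sqrt{3n}$, the Hoeffding tail $\PP(X<t)\le e^{-nt^2/2}$, and the elementary identity $\sup_t t^2 e^{-nt^2/2}=2/(en)$ to bound $m_X(t)(m_X(t)-t)=m_X(t)^2+|t|m_X(t)$ by $O(1/n)$ directly.

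The main obstacle is the intermediate sub-range where $p_X(t)<1/3$ with $t\in(0,1-2/n)$: here log-concavity alone only gives $m_X(t)-t\le 1/\sqrt n$, producing $\Phi(t)=O(1/\sqrt n)$, so closing the gap down to $O(1/n)$ requires combining it with the variance-based Cauchy--Schwarz bound (and, if needed, the sharper $\tfrac{1}{n-1}$-concavity of $f_X$ coming from Brunn's theorem applied to cross-sections of the cube).
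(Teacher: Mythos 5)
Your treatment of the tip ($t\ge 1-2/n$, via the exact simplex formula), of the central window $|t|\lesssim 1/\sqrt{n}$ (via $\EE[X^2]=1/(3n)$), and of $t\le 0$ (via the reflection identity and a Chernoff tail) is sound and matches in spirit parts (1)--(3) of the paper's Lemma \ref{lem:derived_C}; the conditional-variance inequality $p_X(t)m_X(t)(m_X(t)-t)\le\int_t^1 x(x-t)f_X(x)\,dx$ is also correct. But there is a genuine gap exactly where you flag ``the main obstacle,'' and none of the remedies you name closes it. Take $t=1/2$ and $n$ large: there $p_X(t)=e^{-\Theta(n)}$, so the variance bound $m_X(t)(m_X(t)-t)\le \frac{1}{6n\,p_X(t)}$ is vacuous; the hazard-rate estimate $h(t)\ge h(0)\ge\sqrt{n}$ only yields $m_X(t)-t\le 1/\sqrt{n}$, hence $\Phi(t)=O(1/\sqrt n)$, a full factor $\sqrt n$ short; and a further split on whether $m_X(t)\le 1/\sqrt n$ cannot help, since in this range $m_X(t)\approx t$ is of constant order. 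Worse, near $t=1-2/n$ the factor $m_X(t)/(1-m_X(t)^2)$ is itself of order $n$, so the $1/\sqrt n$ bound on $m_X(t)-t$ does not even give $O(1/\sqrt n)$ there. Finally, the ``$\frac{1}{n-1}$-concavity of $f_X$'' from Brunn's theorem, combined with $f_X(1)=0$, gives $f_X(x)\ge\left(\frac{1-x}{1-t}\right)^{n-1}f_X(t)$ for $x\in[t,1]$, hence $p_X(t)\ge\frac{1-t}{n}f_X(t)$, i.e.\ an \emph{upper} bound $h(t)\le n/(1-t)$ on the hazard rate — the wrong direction for bounding $m_X(t)-t$ from above.

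The missing ingredient is a quantitative use of the $\frac1n$-concavity of $p_X$ anchored at $t=0$, where $p_X(0)=1/2$ is known. Writing $\tau=\sqrt[n]{p_X(t)}$, concavity of $p_X^{1/n}$ along the chord from $0$ to $u\ge t$ gives $\sqrt[n]{p_X(u)}\le\frac{u}{t}\tau+\left(1-\frac{u}{t}\right)2^{-1/n}$; integrating this over $[t,1]$ yields $m_X(t)-t\le\frac{t\,\tau}{n-(n+1)\tau}$ (the paper's Lemma \ref{lem:e_half_plain}), which decays like $t\,\tau/n$ and is what produces the $O(1/n)$ bound for all $t$ bounded away from $0$ — including the regime of exponentially small $p_X(t)$, where $\tau$ is still bounded below but is small enough to beat the blow-up of $m_X(t)/(1-m_X(t)^2)$. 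Equivalently, in your hazard-rate language, the chord inequality for the concave function $p_X^{1/n}$ between $0$ and $t$ gives $h(t)\ge\frac{n}{t}\left(\frac{2^{-1/n}}{\tau}-1\right)$, which is far stronger than $h(t)\ge h(0)$ once $t\gg 1/\sqrt n$. Without some such input your argument cannot get below $O(1/\sqrt n)$ on a range of $t$ of constant length, so as written the proof is incomplete.
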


In preparation for the proof of Proposition \ref{prop:derived_C}, we gather the following simple facts whose proofs are rather standard. For that reason, and to make the exposition clearer, we give their proofs in Appendix \ref{sec:appendix}, for completeness.

\begin{obs}\label{obs:one minus}
For every $-1< t< 1$,
\begin{equation*}\label{eq:minus}
m_X(-t)=\frac{p_X(t)}{1-p_X(t)}m_X(t).
\end{equation*}
\end{obs}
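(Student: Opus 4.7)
The plan is to exploit the symmetry of $X$ around the origin. Since $X = \frac{1}{n}\sum_{i=1}^n U_i$ is a sum of symmetric random variables, $X \stackrel{d}{=} -X$, so $\mathbb{E}[X]=0$ and (by continuity of $X$) $p_X(-t) = \Pr(X > -t) = \Pr(X < t) = 1-p_X(t)$ for every $-1<t<1$.

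First I would split the expectation of $X$ according to whether $X>t$ or $X\le t$:
\begin{equation*}
0 = \mathbb{E}[X] = \mathbb{E}[X\mathbf{1}_{X>t}] + \mathbb{E}[X\mathbf{1}_{X\le t}] = m_X(t)\,p_X(t) + \mathbb{E}[X\mathbf{1}_{X\le t}],
\end{equation*}
which identifies $\mathbb{E}[X\mathbf{1}_{X\le t}] = -m_X(t)\,p_X(t)$. Next, I would use the symmetry $X \stackrel{d}{=} -X$ to rewrite the left-hand side: since conditioning $-X > -t$ is the same as conditioning $X < t$,
\begin{equation*}
m_X(-t) = \mathbb{E}[X \mid X > -t] = \mathbb{E}[-X \mid -X > -t] = -\mathbb{E}[X \mid X < t].
\end{equation*}
Dividing the identity from the first step by $\Pr(X < t) = 1-p_X(t)$ then yields
\begin{equation*}
-\mathbb{E}[X \mid X < t] = \frac{m_X(t)\,p_X(t)}{1-p_X(t)},
\end{equation*}
which combined with the previous display gives the claimed relation.

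There is essentially no obstacle here: the argument is a two-line consequence of the symmetry of $X$ and the fact that $\mathbb{E}[X]=0$. The only small point to note is the absolute continuity of $X$, which ensures $\Pr(X=t)=0$ and hence that replacing $X<t$ with $X\le t$ (or vice versa) in the conditioning events causes no issues.
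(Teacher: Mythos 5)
Your argument is correct and is essentially the paper's own proof: both rest on the symmetry $X\stackrel{d}{=}-X$ to identify $\mathbb{E}[X\mid X<t]=-m_X(-t)$ and then decompose $0=\mathbb{E}[X]$ over the events $\{X>t\}$ and $\{X<t\}$. The remark about absolute continuity handling the boundary event $\{X=t\}$ is a fine, if minor, addition.
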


\begin{claim}\label{claim:var}
For every $0\leq t< 1$,
\begin{equation*}\label{eq:var}
m_X(t)\leq t+\sqrt{\EE[X^2]}.
\end{equation*}
\end{claim}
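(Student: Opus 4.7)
The plan is to reduce Claim \ref{claim:var} to two ingredients: the boundary case $t=0$, and the monotonicity of $t\mapsto m_X(t)-t$ on $[0,1)$. Both exploit that $X=\frac{1}{n}\sum_{i=1}^n U_i$ has a symmetric log-concave density on $[-1,1]$ (being a convolution of uniform densities).

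First I would handle $t=0$. By symmetry of $X$, one has $\PP(X>0)=1/2$ and $\EE[X\mathbf{1}_{X>0}]=\tfrac{1}{2}\EE[|X|]$, hence $m_X(0)=\EE[|X|]$. Cauchy--Schwarz then gives
\[m_X(0)=\EE[|X|]\le\sqrt{\EE[X^2]}.\]

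Next I would show that $t\mapsto m_X(t)-t$ is non-increasing on $[0,1)$. Using \eqref{eq:m}, write $m_X(t)-t=\frac{1}{p_X(t)}\int_t^1 p_X(s)\,\d s$. Since the density $f$ of $X$ is log-concave, so is the tail $p_X$ (a standard consequence of Pr\'ekopa's theorem), and therefore for $s\ge t$ the tangent inequality at $t$ reads
\[p_X(s)\le p_X(t)\exp\!\left(-\frac{(s-t)f(t)}{p_X(t)}\right).\]
Integrating $s$ from $t$ to $\infty$ yields $m_X(t)-t\le p_X(t)/f(t)$. Combined with the identity $m_X'(t)=(m_X(t)-t)f(t)/p_X(t)$ from \eqref{eq:log_deriv} (recalling $p_X'=-f$), this forces $m_X'(t)\le 1$, which is the desired monotonicity.

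Putting the two steps together gives, for every $0\le t<1$,
\[m_X(t)-t\le m_X(0)-0\le\sqrt{\EE[X^2]},\]
as required. The main obstacle is the monotonicity step, which hinges on the log-concavity of the tail $p_X$; this is classical but should be cited explicitly. Equivalently, one can invoke the well-known fact that log-concave distributions have the \emph{increasing hazard rate} property, which is in turn equivalent to the non-increase of the mean residual life function $t\mapsto m_X(t)-t$.
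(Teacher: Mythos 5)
Your proof is correct and follows essentially the same route as the paper's: both reduce the claim to the non-increase of the mean residual life $t\mapsto m_X(t)-t$ on $[0,1)$ together with the Cauchy--Schwarz bound $m_X(0)\le\sqrt{\EE[X^2]}$. The only difference is in mechanism --- the paper gets the monotonicity from the log-concavity of $t\mapsto\int_t^1 p_X(x)\d x$ (via Brunn--Minkowski, so that its logarithmic derivative decreases), whereas you use the log-concavity of $p_X$ itself through a tangent-line (increasing hazard rate) estimate combined with \eqref{eq:log_deriv}; both are valid.
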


\begin{claim}\label{claim:small_exact}
The function $p_X$ is $\frac{1}{n}$-concave in the interval $[-1,1]$.
Consequently, for every $-1\leq t< 1$,
\begin{equation}\label{eq:small_exact}
p_X(t)\leq\frac{(n(1-t)/2)^n}{n!}<(e(1-t)/2)^n.
\end{equation}
Furthermore, for every $0<t\leq 1$,
\begin{equation}\label{eq:chernoff}
p_X(t)< e^{-3nt^2/2}.
\end{equation}
\end{claim}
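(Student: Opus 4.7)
The plan is to prove the three assertions in sequence, using the first as the engine for the second. To establish that $p_X$ is $\frac{1}{n}$-concave on $[-1,1]$, I would rewrite $p_X(t) = 2^{-n}\vol\bigl([-1,1]^n \cap H^+_{\mathbf{1}_n,nt}\bigr)$, with $H^+_{\mathbf{1}_n,nt} = \{u : \langle u, \mathbf{1}_n\rangle > nt\}$. For $t_0,t_1 \in [-1,1]$ and $\lambda\in[0,1]$, if $A,B$ denote the corresponding cube-slabs, convexity of $[-1,1]^n$ and linearity of $\langle\cdot,\mathbf{1}_n\rangle$ give $\lambda A + (1-\lambda)B \subset [-1,1]^n \cap H^+_{\mathbf{1}_n,n(\lambda t_0+(1-\lambda)t_1)}$. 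The Brunn--Minkowski inequality then yields $p_X(\lambda t_0 + (1-\lambda)t_1)^{1/n} \geq \lambda p_X(t_0)^{1/n} + (1-\lambda) p_X(t_1)^{1/n}$, i.e., $p_X^{1/n}$ is concave.

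For the polynomial bound, I would first observe that when $t \geq 1 - 2/n$ the region $\{u \in [-1,1]^n : \sum u_i > nt\}$ coincides with the corner simplex $\{u : u_i \le 1,\ \sum(1-u_i) < n(1-t)\}$, since $n(1-t) \leq 2$ ensures the lower bounds $u_i \geq -1$ are inactive. This simplex has volume $(n(1-t))^n/n!$, so $p_X(t) = (n(1-t)/2)^n/n!$ on $[1-2/n,1]$. Combining this with the $1/n$-concavity and $p_X(1)=0$: concavity of $p_X^{1/n}$ together with $p_X^{1/n}(1)=0$ implies $t \mapsto p_X(t)^{1/n}/(1-t)$ is non-decreasing on $[-1,1)$ (if $t_2 = \nu t_1 + (1-\nu)$ with $\nu = (1-t_2)/(1-t_1)\in(0,1)$, concavity gives $p_X(t_2)^{1/n}\ge \nu\, p_X(t_1)^{1/n}$). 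Evaluating at $t_2 = 1-2/n$ shows $p_X(t)^{1/n} \leq n(1-t)/(2\,(n!)^{1/n})$, proving the polynomial bound on all of $[-1,1]$; the weaker bound $(e(1-t)/2)^n$ then follows from $n! > (n/e)^n$ (immediate from $e^n = \sum_{k\geq 0} n^k/k! > n^n/n!$).

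For the Chernoff-type inequality, I would apply Markov's inequality to $e^{n\lambda X} = \prod_{i=1}^n e^{\lambda U_i}$ for $\lambda > 0$, using $\EE[e^{\lambda U_i}] = \sinh(\lambda)/\lambda$, to obtain
\[
p_X(t) \;\leq\; \bigl(\sinh(\lambda)/\lambda\bigr)^n e^{-n\lambda t}.
\]
I would then prove the sub-Gaussian MGF estimate $\sinh(\lambda)/\lambda < e^{\lambda^2/6}$ for $\lambda > 0$ by a term-by-term power-series comparison: one needs $1/(2k+1)! \leq 1/(6^k k!)$, i.e., $6^k k! \leq (2k+1)!$ for $k \geq 0$. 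This holds by induction since the successive ratio is $(2k+2)(2k+3)/(6(k+1)) = (2k+3)/3 \geq 1$, strictly for $k\ge 1$. Optimizing at $\lambda = 3t$ yields $p_X(t) < e^{-3nt^2/2}$.

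The main obstacle is the transition from the explicit simplex formula valid only on the short interval $[1-2/n,\,1]$ to a global upper bound: this is exactly what the $1/n$-concavity is designed to supply, acting as a mechanism for extrapolating a tight bound near the tip of the cube across the entire range. The Chernoff half is elementary but requires the sharp constant $1/6$ (reflecting $\Var(U_i)=1/3$) rather than the weaker constant delivered by a Hoeffding-type argument.
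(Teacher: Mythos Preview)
Your proposal is correct and follows essentially the same route as the paper: Brunn--Minkowski for the $1/n$-concavity, the exact corner-simplex formula near $t=1$ extrapolated backward via that concavity (the paper phrases this as $p_X(t)\le\frac{(1-t)^n}{(1-x)^n}p_X(x)$ rather than as monotonicity of $p_X^{1/n}(t)/(1-t)$, but it is the same computation), and the Chernoff bound with the term-by-term estimate $\sinh(\lambda)/\lambda<e^{\lambda^2/6}$ optimized at $\lambda=3t$. Your interval $[1-2/n,1]$ for the exact formula is slightly wider than the paper's $[1-1/n,1]$, and your inductive verification of $6^k k!\le(2k+1)!$ is more explicit than the paper's bare assertion, but neither difference is substantive.
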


We remark that the above facts, as well as the following lemma, hold under weaker assumptions on $X$. For simplicity, we refrain from formulating them in the most general setting.\\

\begin{lemma}\label{lem:e_half_plain}
For every $0<t<1$ such that $\sqrt[n]{p_X(t)}<n/(n+1)$,
\begin{equation}\label{eq:m_concave_bound}
\frac{m_X(t)}{t}\leq 1+\frac{\sqrt[n]{p_X(t)}}{n-(n+1)\sqrt[n]{p_X(t)}}.
\end{equation}
Consequently, if $\frac{1}{\sqrt[n]{p_X(t)}}>1+\frac{2}{(1-t^2)n}$, then
\begin{equation}\label{eq:derived_concave_bound} 
\left(m_X(t)-t\right)\frac{m_X(t)}{1-m_X(t)^2}<\frac{t^2}{\left(\frac{1}{\sqrt[n]{p_X(t)}}-1\right)(1-t^2)n-2}.
\end{equation}
\end{lemma}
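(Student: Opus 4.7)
My plan is to prove the first inequality \eqref{eq:m_concave_bound} via an upper bound on $\int_t^1 p_X(s)\,ds$ coming from the $1/n$-concavity of $p_X$ (Claim~\ref{claim:small_exact}) together with the symmetry fact $p_X(0)=1/2$, and then deduce the second inequality \eqref{eq:derived_concave_bound} by monotonicity of an auxiliary one-variable function. Set $a := \sqrt[n]{p_X(t)}$ and $\phi(s) := p_X(s)^{1/n}$. Since $\phi$ is concave on $[-1,1]$, the three-point concavity inequality applied at $0 < t < s$ rearranges to
\[
\phi(s) \le \phi(t) + \frac{s-t}{t}\,\bigl(\phi(t) - \phi(0)\bigr).
\]
By symmetry of $X$ one has $\phi(0) = 2^{-1/n}$, and the elementary bound $(1+1/n)^n \ge 2$ (monotone convergence to $e$) gives $\phi(0) \ge n/(n+1)$. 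Since the coefficient $(s-t)/t$ is positive and $\phi(t)-\phi(0)$ is negative, replacing $\phi(0)$ by its lower bound $n/(n+1)$ weakens the estimate to
\[
\phi(s) \le L(s) := a + \frac{s-t}{t}\Bigl(a - \frac{n}{n+1}\Bigr), \qquad s \ge t.
\]

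The affine function $L$ satisfies $L(t) = a$ and has negative slope (using $a < n/(n+1)$), vanishing at $s^{***} := tn/\bigl(n-(n+1)a\bigr)$. Using the linear substitution $u = L(s)$, a direct computation yields
\[
\int_t^{1} p_X(s)\,ds \le \int_t^{\min(1,s^{***})} L(s)^n\,ds \le \frac{t\, a^{n+1}}{n-(n+1)a}.
\]
Dividing by $p_X(t) = a^n$ and rearranging gives \eqref{eq:m_concave_bound}.

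For \eqref{eq:derived_concave_bound}, the function $f(M) := (M-t)M/(1-M^2)$ is strictly increasing in $M$ on $(t,1)$, so it suffices to plug in the upper bound $M := tn(1-a)/\bigl(n-(n+1)a\bigr)$ supplied by the first part. Writing $c := n(1/a - 1)$ so that $M = tc/(c-1)$, a short computation gives
\[
f(M) = \frac{t^2 c}{c^2(1-t^2) - 2c + 1}.
\]
The hypothesis $1/\sqrt[n]{p_X(t)} > 1 + 2/\bigl(n(1-t^2)\bigr)$ reads $c(1-t^2) > 2$, which both forces $M \in (t,1)$ (since $c(1-t)(1+t) > 2 \ge 1+t$ gives $c(1-t) > 1$) and makes the claimed denominator $c(1-t^2)-2$ positive. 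The desired strict inequality $f(M) < t^2/[c(1-t^2)-2]$ cross-multiplies to $c\cdot[c(1-t^2)-2] < c^2(1-t^2) - 2c + 1$, i.e. to the tautology $0 < 1$.

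The only non-trivial move is the first step: replacing $\phi(0)$ by $n/(n+1)$ in the concavity bound shapes the pointwise estimate into exactly the form producing the target denominator $n-(n+1)a$, and relies crucially on both the symmetry of $X$ (giving $\phi(0) = 2^{-1/n}$) and the elementary inequality $(1+1/n)^n \ge 2$. The remaining arithmetic -- including that $L$ is non-negative on $[t,1]$ (from $a \ge n(1-t)/(n+1)$, itself a consequence of the chord bound $\phi(t) \ge \phi(0)(1-t) \ge n(1-t)/(n+1)$) -- is then direct bookkeeping.
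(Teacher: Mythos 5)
Your argument is correct and follows essentially the same route as the paper: you use the $\frac{1}{n}$-concavity of $p_X$ to majorize $p_X^{1/n}$ on $[t,1]$ by the affine function through $(0,p_X(0)^{1/n})$ and $(t,p_X(t)^{1/n})$, weaken $2^{-1/n}$ to $n/(n+1)$ via $(1+1/n)^n\ge 2$, integrate to bound $m_X(t)-t$ through \eqref{eq:m}, and then push the resulting bound through the function $M\mapsto (M-t)M/(1-M^2)$, which is increasing on $[t,1)$ for $t>0$. The only differences are cosmetic (you substitute $c=n(1/a-1)$ and integrate up to the zero of the affine majorant rather than dropping the $L(1)^{n+1}$ term), and all the algebra checks out.
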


\begin{proof}
For simplicity, denote $\tau:=\sqrt[n]{p_X(t)}$.
For every $t \leq u\leq 1$, since $p_X$ is $\frac{1}{n}$-concave in the interval $[-1,1]$, by Claim \ref{claim:small_exact},
$$
\left(1-\frac{t}{u}\right)\sqrt[n]{p_X\left(0\right)}+\frac{t}{u}\sqrt[n]{p_X\left(u\right)}\leq\sqrt[n]{p_X\left(\frac{t}{u} u\right)}=\sqrt[n]{p_X\left(t\right)}$$
and hence,
\begin{equation*}
\sqrt[n]{p_X\left(u\right)}\leq\frac{u}{t}\sqrt[n]{p_X\left(t\right)}+\left(1-\frac{u}{t}\right)\sqrt[n]{p_X(0)}=\frac{u}{t}\tau+\left(1-\frac{u}{t}\right)\frac{1}{\sqrt[n]{2}}.
\end{equation*}
(In particular, $\frac{1}{t}\tau+\left(1-\frac{1}{t}\right)\frac{1}{\sqrt[n]{2}}\geq\sqrt[n]{p_X(1)}=0$.)
Therefore, 
\begin{align*}
\int_t^1 p_X(u)\d u&\leq \int_t^1\left(\frac{u}{t}\tau+\left(1-\frac{u}{t}\right)\frac{1}{\sqrt[n]{2}}\right)^n \d u\\
&=\frac{t}{(n+1)\left(\frac{1}{\sqrt[n]{2}}-\tau\right)}\left(\tau^{n+1}-\left(\frac{1}{t}\tau+\left(1-\frac{1}{t}\right)\frac{1}{\sqrt[n]{2}}\right)^{n+1}\right)\\
&\leq\frac{t\tau}{(n+1)\left(\frac{1}{\sqrt[n]{2}}-\tau\right)}\tau^n\leq\frac{t\tau}{(n+1)\left(\frac{n}{n+1}-\tau\right)}\tau^n\\
&=\frac{t\sqrt[n]{p_X(t)}}{n-(n+1)\sqrt[n]{p_X(t)}}p_X(t),
\end{align*}
and \eqref{eq:m_concave_bound} follows by \eqref{eq:m}.
We proceed to prove \eqref{eq:derived_concave_bound}. Denote
$$u(t):=t\left(1+\frac{\sqrt[n]{p_X(t)}}{n-(n+1)\sqrt[n]{p_X(t)}}\right).$$
By \eqref{eq:m_concave_bound},
$$
\left(m_X(t)-t\right)\frac{m_X(t)}{1-m_X(t)^2}\leq\left(u(t)-t\right)\frac{u(t)}{1-u(t)^2},
$$
and a straightforward computation shows that
\begin{align*}
\left(u(t)-t\right)\frac{u(t)}{1-u(t)^2}&=\frac{t^2}
{\left(\frac{1}{\sqrt[n]{p_X(t)}}-1\right)(1-t^2)n-2+\frac{\sqrt[n]{p_X(t)}}{\left(1-\sqrt[n]{p_X(t)}\right)n}}\\
&\leq\frac{t^2}
{\left(\frac{1}{\sqrt[n]{p_X(t)}}-1\right)(1-t^2)n-2}.
\qedhere\end{align*}
\end{proof}

The proof of Proposition \ref{prop:derived_C}, as well as the proof of Theorem \ref{thm:cube_main} in the next section, are based on the following Lemma.
\begin{lemma}\label{lem:derived_C}
Let $\alpha,\beta,\gamma$ be positive real numbers such that $\alpha,\beta\leq \sqrt{3n}$ and $\gamma\leq 1$.

\begin{enumerate}
\item If $\alpha>\sqrt{2}$ then for every $-1< t \leq-\frac{\alpha}{\sqrt{3n}}$,
$$\left(m_X(t)-t\right)\frac{m_X(t)}{1-m_X(t)^2}< h_1(\alpha,n):=\frac{\alpha^6}{(\alpha^2-2)^2\left(e^{\alpha^2/2}-1\right)}\cdot\frac{1}{3n-1}.$$

\item For every $-\frac{\alpha}{\sqrt{3n}}< t \leq 0$,
$$\left(m_X(t)-t\right)\frac{m_X(t)}{1-m_X(t)^2}<h_2(\alpha,n):=\frac{1+\alpha}{3n-1}.$$

\item If $\beta<\sqrt{3n}-1$ then for every $0< t<\frac{\beta}{\sqrt{3n}}$,
$$\left(m_X(t)-t\right)\frac{m_X(t)}{1-m_X(t)^2}< h_3(\beta,n):=\frac{1+\beta}{3n-(1+\beta)^2}.$$

\item If $\frac{16}{\beta^2}<(4+3\gamma^2)(1-\gamma^2)$ then for every $\frac{\beta}{\sqrt{3n}}\leq t\leq\gamma$,
$$\left(m_X(t)-t\right)\frac{m_X(t)}{1-m_X(t)^2}<h_4(\beta, \gamma,n):=\frac{8/3}{\left((4+3\gamma^2)(1-\gamma^2)-\frac{16}{\beta^2}\right)n}.$$

\item If $\left(\left(1+\frac{2}{e}\right)-\left(1-\frac{2}{e}\right)\frac{1}{\gamma^2}\right)n>\frac{2}{\gamma^2}$ then for every $\gamma\leq t<1$,
$$\left(m_X(t)-t\right)\frac{m_X(t)}{1-m_X(t)^2}<h_5(\gamma,n):=\frac{1}{\left(\left(1+\frac{2}{e}\right)-\left(1-\frac{2}{e}\right)\frac{1}{\gamma^2}\right)n-\frac{2}{\gamma^2}}.$$
\end{enumerate}
\end{lemma}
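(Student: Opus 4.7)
The plan is to prove the five inequalities separately, each as an elementary computation using Claim \ref{claim:var}, Claim \ref{claim:small_exact}, Observation \ref{obs:one minus}, and Lemma \ref{lem:e_half_plain}. Cases (2) and (3) are the most direct. Since $\EE[X^2] = 1/(3n)$, Claim \ref{claim:var} gives $m_X(t) \leq t + 1/\sqrt{3n}$ for $0 \leq t < 1$; combining this with monotonicity of $m_X$ yields $0 \leq m_X(t) \leq m_X(0) \leq 1/\sqrt{3n}$ for $-1 \leq t \leq 0$. Substituting these estimates and the obvious bound $1 - m_X(t)^2 \geq (3n - 1)/(3n)$ in case (2), and analogously in case (3), directly yields $h_2$ and $h_3$.

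Cases (4) and (5) reduce to Lemma \ref{lem:e_half_plain} via two different choices of upper bound on $p_X$. For case (4), I would use the Chernoff-type estimate $p_X(t) \leq e^{-3nt^2/2}$ from Claim \ref{claim:small_exact}, giving $1/\sqrt[n]{p_X(t)} - 1 \geq e^{3t^2/2} - 1$. The second-order Taylor bound $e^{3t^2/2} \geq 1 + 3t^2/2 + 9t^4/8$, combined with the fact that $(3/2 + 9t^2/8)(1 - t^2)$ is decreasing in $t$, shows that $(1/\sqrt[n]{p_X(t)} - 1)(1 - t^2)/t^2$ is at least $3(4 + 3\gamma^2)(1 - \gamma^2)/8$ on $[\beta/\sqrt{3n},\gamma]$; together with $2/t^2 \leq 6n/\beta^2$ this gives exactly $h_4$. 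For case (5), I would instead use $p_X(t) \leq (e(1-t)/2)^n$ from Claim \ref{claim:small_exact}, which leads to $(1/\sqrt[n]{p_X(t)} - 1)(1 - t^2) \geq t^2 + (2/e)t + (2/e - 1)$; the bounds $1/t \geq 1$ and $1/t^2 \leq 1/\gamma^2$ on $[\gamma, 1)$ then produce $h_5$. In both cases the stated hypothesis guarantees positivity of the denominator and implies the hypothesis of Lemma \ref{lem:e_half_plain}.

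Case (1) is the most delicate and calls for a different tactic: reducing to the positive side via Observation \ref{obs:one minus}. Setting $s = -t \geq \alpha/\sqrt{3n}$, $p = p_X(s)$, and $u = m_X(s)$, the identity $m_X(t) = pu/(1-p)$ rewrites the quantity of interest as
\[
(m_X(t) - t)\,\frac{m_X(t)}{1 - m_X(t)^2} = \frac{p u \bigl(s(1 - p) + p u\bigr)}{(1 - p)^2 - p^2 u^2}.
\]
Substituting $u \leq s + 1/\sqrt{3n}$ from Claim \ref{claim:var} and $p \leq e^{-3ns^2/2}$ from Claim \ref{claim:small_exact} reduces the task to a one-variable optimization in $s \in [\alpha/\sqrt{3n}, 1]$. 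The key observation is that $\alpha > \sqrt{2}$ makes the map $s \mapsto s^2 e^{-3ns^2/2}$ decreasing on this interval, so the supremum is attained at $s = \alpha/\sqrt{3n}$. The hard part will be carrying out the algebra cleanly enough to recover the precise form $h_1 = \alpha^6/[(\alpha^2 - 2)^2 (e^{\alpha^2/2} - 1)(3n - 1)]$: the factor $(\alpha^2 - 2)^2$ reflects a lower bound on the denominator $(1 - p)^2 - p^2 u^2$, which is under control precisely when $p$ is as small as $e^{-\alpha^2/2}$ with $\alpha > \sqrt{2}$, while the $1/(3n - 1)$ factor comes from combining the $1/(3n)$ scaling of $s^2$ at the endpoint with the $u^2$ contribution to the denominator.
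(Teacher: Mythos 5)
Parts (2)--(5) of your plan reproduce the paper's argument essentially verbatim: (2) and (3) follow from $m_X(t)\le m_X(0)\le\sqrt{\EE[X^2]}=1/\sqrt{3n}$ (resp.\ $m_X(t)\le t+1/\sqrt{3n}$) together with $1-m_X(t)^2\ge 1-\tfrac{1}{3n}$, and (4), (5) follow from \eqref{eq:derived_concave_bound} with exactly the two tail bounds you name from Claim \ref{claim:small_exact}, the same Taylor/monotonicity estimates, and the same verification that the hypotheses make the denominator positive. Those four parts are correct.

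Part (1), however, contains a genuine gap, and you half-acknowledge it yourself (``the hard part will be carrying out the algebra''). Your reduction to the positive side via Observation \ref{obs:one minus} and your identity for the quantity in terms of $p=p_X(s)$, $u=m_X(s)$ are correct, but the substitution $u\le s+1/\sqrt{3n}$ from Claim \ref{claim:var} is not the mechanism that produces $h_1$. In the paper, the factor $(\alpha^2-2)^2$ comes from applying \eqref{eq:m_concave_bound} (Lemma \ref{lem:e_half_plain}, i.e.\ the $\tfrac1n$-concavity of $p_X$) at $-t$, which gives the multiplicative bound $m_X(-t)/(-t)<\alpha^2/(\alpha^2-2)$; it does not come from a lower bound on the denominator $(1-p)^2-p^2u^2$ as you assert (the paper bounds that denominator simply by $(1-p)^2(1-\tfrac1{3n})$, whence the $1/(3n-1)$). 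With your substitutions the endpoint value one obtains is of the form $\frac{\alpha^2}{3n}e^{-\alpha^2/2}\,(1+\tfrac1\alpha)\,(1+O(e^{-\alpha^2/2}))/(1-2e^{-\alpha^2/2})$, which is a \emph{different} function of $(\alpha,n)$ than $h_1$; it happens to be smaller than $h_1$ for the values $\alpha\approx 2$ used later, but for $\alpha$ close to $\sqrt{3n}$ the ratio to $h_1$ tends to roughly $(1+\tfrac1\alpha)\tfrac{3n-1}{3n}>1$, so your route does not establish the lemma as stated for all admissible $\alpha$. Since the exact form of $h_1$ is used quantitatively downstream (e.g.\ $h_1(2-\tfrac1n,n)<\tfrac1n$ in Proposition \ref{prop:cube_reduced}), ``a bound of the right shape'' is not enough here. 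To repair part (1), replace Claim \ref{claim:var} by \eqref{eq:m_concave_bound} applied at $-t$ (using \eqref{eq:chernoff} to check its hypothesis via $\sqrt[n]{p_X(-t)}<\frac{1}{1+\alpha^2/(2n)}$), deduce $m_X(t)<\frac{\alpha^2}{\alpha^2-2}\cdot\frac{-t}{e^{3nt^2/2}-1}$, and then use the monotonicity of $x\mapsto x/(e^x-1)$ rather than of $s\mapsto s^2e^{-3ns^2/2}$ to evaluate at the endpoint.
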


\begin{proof}
Note first that if $-1\leq t\leq 0$ then by Claim \ref{claim:var}, 
\begin{equation}\label{eq:tleq0}
m_X(t)\leq m_X(0)\leq \sqrt{\EE[X^2]}=\frac{1}{\sqrt{3n}}.
\end{equation}

Suppose that $-1< t\leq -\frac{\alpha}{\sqrt{3n}}$.
By \eqref{eq:chernoff}, 
$$
\sqrt[n]{p_X(-t)}\leq e^{-\frac{3}{2}t^2}\leq e^{-\frac{\alpha^2}{2n}}< \frac{1}{1+\frac{\alpha^2}{2n}}.
$$
Therefore, by \eqref{eq:m_concave_bound}, 
\begin{equation*}
\frac{m_X(-t)}{-t}< 1+\frac{\frac{1}{1+\frac{\alpha^2}{2n}}}{n-(n+1)\frac{1}{1+\frac{\alpha^2}{2n}}}=\frac{\alpha^2}{\alpha^2-2}
\end{equation*}
and hence, by Observation \ref{obs:one minus} and \eqref{eq:chernoff},
$$
m_X(t)=\frac{m_X(-t)}{\frac{1}{p_X(-t)}-1}<\frac{\alpha^2}{\alpha^2-2}\cdot\frac{-t}{ e^{3nt^2/2}-1}.
$$
In particular,
$$
m_X(t)<\frac{\alpha^2}{\alpha^2-2}\cdot\frac{-t}{ e^{\alpha^2/2}-1}< \frac{2}{\alpha^2-2}(-t).
$$
Therefore, by \eqref{eq:tleq0}, and since the function $x\mapsto\frac{x}{e^x-1}$ is decreasing in the interval $(0,\infty)$,
\begin{align*}
\left(m_X(t)-t\right)&\frac{m_X(t)}{1-m_X(t)^2}<\left(\frac{2}{\alpha^2-2}(-t)-t\right) \frac{\frac{\alpha^2}{\alpha^2-2}\cdot\frac{-t}{ e^{3nt^2/2}-1}}{1-\frac{1}{3n}}\\
=&\frac{2\alpha^4}{(\alpha^2-2)^2}\cdot\frac{3n\,t^2/2}{e^{3n\,t^2/2}-1}\cdot\frac{1}{3n-1}\leq\frac{2\alpha^4}{(\alpha^2-2)^2}\cdot\frac{\alpha^2/2}{e^{\alpha^2/2}-1}\cdot\frac{1}{3n-1}\\
=&\frac{\alpha^6}{(\alpha^2-2)^2\left(e^{\alpha^2/2}-1\right)}\cdot\frac{1}{3n-1}.
\end{align*}

If $-\frac{\alpha}{\sqrt{3n}}< t\leq 0$ then by \eqref{eq:tleq0},
\begin{equation*}
\left(m_X(t)-t\right)\frac{m_X(t)}{1-m_X(t)^2}\leq \left(\frac{1}{\sqrt{3n}}-t\right)\frac{\frac{1}{\sqrt{3n}}}{1-\frac{1}{3n}}<\frac{1+\alpha}{\sqrt{3n}}\cdot\frac{\frac{1}{\sqrt{3n}}}{1-\frac{1}{3n}}=\frac{1+\alpha}{3n-1}.
\end{equation*}

If $0< t<\frac{\beta}{\sqrt{3n}}$, then by Claim \ref{claim:var}, 
$m_X(t)\leq \sqrt{\EE[X^2]}+t=\frac{1}{\sqrt{3n}}+t<\frac{1+\beta}{\sqrt{3n}}$ 
and hence,
\begin{equation*}
\left(m_X(t)-t\right)\frac{m_X(t)}{1-m_X(t)^2}<\frac{1}{\sqrt{3n}}\cdot\frac{\frac{1+\beta}{\sqrt{3n}}}{1-\frac{(1+\beta)^2}{3n}}=\frac{1+\beta}{3n-(1+\beta)^2}.
\end{equation*}

Next, suppose that $\frac{\beta}{\sqrt{3n}}\leq t\leq\gamma$.
Then, by \eqref{eq:chernoff}, 
\begin{align*}
\frac
{\left(\frac{1}{\sqrt[n]{p_X(t)}}-1\right)(1-t^2)n-2}{t^2}&>\frac{n\left(e^{3t^2/2}-1\right)(1-t^2)-2}{t^2}\\
&\geq
\frac{n\left(\frac{3}{2}t^2+\frac{1}{2}\left(\frac{3}{2}\right)^2t^4\right)(1-t^2)-2}{t^2}\\
&=\frac{(4+3t^2)(1-t^2)n-\frac{16}{3t^2}}{8/3}\geq\frac{\left((4+3\gamma^2)(1-\gamma^2)-\frac{16}{\beta^2}\right)n}{8/3}>0,
\end{align*}
and the desired bound follows by \eqref{eq:derived_concave_bound}.

Finally, suppose that $\gamma\leq t<1$. Then, by \eqref{eq:small_exact}, 
\begin{align*}
\frac
{\left(\frac{1}{\sqrt[n]{p_X(t)}}-1\right)(1-t^2)n-2}{t^2}&\geq\frac{n\left(\frac{2}{e(1-t)}-1\right)(1-t^2)-2}{t^2}=\frac{\frac{2}{e}n(1+t)-n(1-t^2)-2}{t^2}\\
&>\frac{\frac{2}{e}n(1+t^2)-n(1-t^2)-2}{t^2}\\
&=\left(\left(1+\frac{2}{e}\right)-\left(1-\frac{2}{e}\right)\frac{1}{t^2}\right)n-\frac{2}{t^2}\\
&\geq\left(\left(1+\frac{2}{e}\right)-\left(1-\frac{2}{e}\right)\frac{1}{\gamma^2}\right)n-\frac{2}{\gamma^2}>0,
\end{align*}
and the desired bound follows by \eqref{eq:derived_concave_bound}.
\end{proof}
Proposition \ref{prop:derived_C} easily follows from 
Lemma \ref{lem:derived_C}.
\begin{proof}[Proof of Proposition \ref{prop:derived_C}]
Invoke Lemma \ref{lem:derived_C}, for instance with $\alpha=2$, $\beta=3$ and $\gamma=1/2$. 
This yields that for every $n\geq 12$,
$$\left(m_X(t)-t\right)\frac{m_X(t)}{1-m_X(t)^2}
<\begin{cases}
h_1(2,n)=\frac{16}{\left(e^2-1\right)(3n-1)} & -1< t\leq -\frac{2}{\sqrt{3n}}\\
h_2(2,n)=\frac{3}{3n-1} & -\frac{2}{\sqrt{3n}}<t\leq 0\\
h_3(3,n)=\frac{4}{3n-16} & 0< t< \frac{3}{\sqrt{3n}}\\
h_4(3,\frac{1}{2},n)=\frac{384}{257\,n} & \frac{3}{\sqrt{3n}}\leq t<\frac{1}{2}\\
h_5(\frac{1}{2},n)=\frac{1}{\left(\frac{10}{e}-3\right)n-8} & \frac{1}{2}\leq t<1
\end{cases}.
$$
Hence, for every $n\geq 12$ and every $-1< t<1$,
$$
\left(m_X(t)-t\right)\frac{m_X(t)}{1-m_X(t)^2}<\frac{3}{2n-15}
$$
and Proposition \ref{prop:derived_C} readily follows. 
\end{proof}

Note that there is no choice of parameters (specifically, $\beta$) in Lemma \ref{lem:derived_C} that would show that \eqref{eq:deriv_enough} holds for every $-1< t<1$. 

\section{Isobarycentric inequalities for the cube}\label{sec:iso_cube}

\subsection{The diagonal case}\label{subsec:diag}
In this section we prove Theorem \ref{thm:cube_main}.
\begin{proposition}\label{prop:cube_reduced}
There is a positive integer $n_0$ such that \eqref{eq:diag} holds for every $n\geq n_0$ and every $-1 < t< 1$, i.e., 
\begin{equation*}
\sqrt[n]{p_X(t)} <   \sqrt{1-m_X(t)^2} \end{equation*}
where $X=\frac{1}{n}\sum_{i=1}^n U_i$, and $U_1,\ldots,U_n$ are independent uniform random variables on $[-1,1]$.
\end{proposition}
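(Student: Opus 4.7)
The plan is to verify the hypotheses of Lemma \ref{lem:deriv} in three subintervals of $(-1,1)$: a left piece $(-1,\beta_A/\sqrt{3n})$, a bridge $[\beta_A/\sqrt{3n},\beta_B/\sqrt{3n}]$ of width $O(1/\sqrt n)$, and a right piece $(\beta_B/\sqrt{3n},1)$. As noted after Lemma \ref{lem:derived_C}, no single parameter choice in that lemma makes \eqref{eq:deriv_enough} hold throughout $(-1,1)$; the main new input will be a direct verification of \eqref{eq:diag} on the bridge.

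For the left piece, fix $\alpha_A\in(\sqrt 2,2)$ close to $2$ and $\beta_A\in\bigl((1+\sqrt 3)/2,\,2\bigr)$, e.g.\ $\alpha_A=99/50$ and $\beta_A=3/2$. A direct limit computation shows that with these choices, $h_1(\alpha_A,n)$, $h_2(\alpha_A,n)$ and $h_3(\beta_A,n)$ are all strictly less than $1/n$ for every sufficiently large $n$: indeed, as $n\to\infty$, $nh_1\to\alpha_A^6/\bigl(3(\alpha_A^2-2)^2(e^{\alpha_A^2/2}-1)\bigr)<1$ (once $\alpha_A$ is close enough to $2$), and $nh_2,nh_3\to(1+\alpha_A)/3,(1+\beta_A)/3<1$. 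Parts (1)--(3) of Lemma \ref{lem:derived_C} then give \eqref{eq:deriv_enough} on $(-1,\beta_A/\sqrt{3n})$, and since $\sqrt[n]{p_X(-1)}=1=\sqrt{1-m_X(-1)^2}$, Lemma \ref{lem:deriv} with $\alpha=-1$ yields \eqref{eq:diag} on that interval. For the right piece, fix $\gamma_B\in\bigl(\sqrt{(e-2)/2},\,1\bigr)$ and $\beta_B$ large enough that $(4+3\gamma_B^2)(1-\gamma_B^2)-16/\beta_B^2>8/3$, e.g.\ $\gamma_B=2/3$ and $\beta_B=10$; then parts (4) and (5) of Lemma \ref{lem:derived_C} give \eqref{eq:deriv_enough} on $(\beta_B/\sqrt{3n},1)$.

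The key step is the bridge. Writing $t=u/\sqrt n$, so that $u$ lies in the fixed compact interval $J:=[\beta_A/\sqrt 3,\,\beta_B/\sqrt 3]$, Claim \ref{claim:small_exact} gives $\sqrt[n]{p_X(t)}\leq e^{-3u^2/(2n)}$ and Claim \ref{claim:var} gives $m_X(t)\leq(u+1/\sqrt 3)/\sqrt n$. Squaring \eqref{eq:diag} and using $e^{-a}\leq 1-a+a^2/2$ for $a\geq 0$, it suffices to show that
\[
2u^2-\tfrac{2}{\sqrt 3}\,u-\tfrac{1}{3}\;>\;\tfrac{9u^4}{2n}\qquad\text{for every }u\in J.
\]
The left-hand side is strictly positive exactly when $u>(3+\sqrt 3)/6\approx 0.789$, and $\beta_A=3/2$ gives $\beta_A/\sqrt 3=\sqrt 3/2\approx 0.866>(3+\sqrt 3)/6$. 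Hence the left-hand side is bounded below by a positive constant on $J$, while the right-hand side is $O(1/n)$, so the inequality holds for every $n\geq n_0$ and \eqref{eq:diag} is established on the bridge.

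Having \eqref{eq:diag} in particular at $t=\beta_B/\sqrt{3n}$ from the bridge step, a second application of Lemma \ref{lem:deriv} with $\alpha=\beta_B/\sqrt{3n}$ propagates it through the right piece and completes the proof. The main obstacle is the bridge: the lower bound $\beta_A>(1+\sqrt 3)/2$ is exactly what is needed to make the combined Chernoff-plus-variance estimate nontrivial on $J$, and without it one would need a more refined estimate (for instance a local CLT or a Berry--Esseen bound) to close the window.
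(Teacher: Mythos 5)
Your proposal is correct and takes essentially the same route as the paper's own proof: both split $(-1,1)$ into two outer regions handled by Lemma \ref{lem:derived_C} together with Lemma \ref{lem:deriv}, and a bridge of width $O(1/\sqrt{n})$ on which \eqref{eq:diag} is verified directly by combining the Chernoff bound \eqref{eq:chernoff} with Claim \ref{claim:var}. The only differences are the specific constants (the paper takes $\alpha=2-\tfrac1n$, $\beta=\sqrt3$, then $\beta=8$, $\gamma=2/3$) and the elementary bound used on the bridge ($e^{-a}\leq 1-a+a^2/2$ in your version versus $e^{-a}<1/(1+a)$ in the paper), which are immaterial.
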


\begin{proof}
First note that
$$
n\,h_1\left(2-\frac{1}{n},n\right)\xrightarrow[n\to\infty]{}\frac{16}{3\left(e^2-1\right)}<1, \quad n\,h_3(\sqrt{3},n)\xrightarrow[n\to\infty]{}\frac{1+\sqrt{3}}{3}<1,
$$
and
$$
n\,h_5\left(\frac{2}{3},n\right)\xrightarrow[n\to\infty]{}\frac{1}{\left(1+\frac{2}{e}\right)-\left(1-\frac{2}{e}\right)\frac{9}{4}}<1,
$$
where $h_1,h_3,h_5$ are as in Lemma \ref{lem:derived_C}.
Hence, there is a positive integer $n_0$ such that for every $n>n_0$,
\begin{equation}\label{eq:4ineq}
h_1\left(2-\frac{1}{n},n\right)<\frac{1}{n},\quad h_3(\sqrt{3},n)<\frac{1}{n},\quad
\frac{3}{1+\frac{64}{n}}>\frac{4+2\sqrt{3}}{3}, \quad 
h_5\left(\frac{2}{3},n\right)<\frac{1}{n}.
\end{equation}

Fix $n\ge n_0$. The first three parts of 
Lemma \ref{lem:derived_C}, with $\alpha:=2-\frac{1}{n}$ and $\beta:=\sqrt{3}$, combined with the first two inequalities in \eqref{eq:4ineq} and the fact that  $h_2(2-\frac{1}{n},n)=\frac{1}{n}$, yield that \eqref{eq:deriv_enough} holds for every $-1<t<\frac{1}{\sqrt{n}}$. 
Since $\sqrt[n]{p_X(-1)}=1=\sqrt{1-m_X(-1)^2}$, it follows from Lemma \ref{lem:deriv} that \eqref{eq:diag} holds for every $-1<t<\frac{1}{\sqrt{n}}$.

For every $\frac{1}{\sqrt{n}}\leq t\leq\frac{8}{\sqrt{3n}}$,
by \eqref{eq:chernoff},  \eqref{eq:var} and the third inequality in \eqref{eq:4ineq},
\begin{align*}
\left(\sqrt[n]{p_X(t)}\right)^2+m_X(t)^2&< e^{-3t^2}+\left(t+\frac{1}{\sqrt{3n}}\right)^2< \frac{1}{1+3t^2}+\left(t+\frac{1}{\sqrt{3}}t\right)^2\\
&=1-t^2\left(\frac{3}{1+3t^2}-\frac{4+2\sqrt{3}}{3}\right)\\
&\leq 1-t^2\left(\frac{3}{1+\frac{64}{n}}-\frac{4+2\sqrt{3}}{3}\right)<1,
\end{align*}
i.e., $\sqrt[n]{p_X(t)}<\sqrt{1-m_X(t)^2}$.

In particular, \eqref{eq:diag} holds for $t=8/\sqrt{3n}$.
The last two parts of 
Lemma \ref{lem:derived_C}, with $\beta:=8$ and $\gamma:=2/3$, yield, by using the last inequality in \eqref{eq:4ineq} and since $h_4(8, \frac{2}{3},n):=\frac{288}{293\,n}<\frac{1}{n}$, that \eqref{eq:deriv_enough} holds for every $\frac{8}{\sqrt{3n}}<t<1$.
Hence, it follows from Lemma \ref{lem:deriv} that \eqref{eq:diag} holds for every $\frac{8}{\sqrt{3n}}<t<1$, which concludes the proof.
\end{proof}

\begin{proof}[Proof of Theorem \ref{thm:cube_main}]
Let $n_0$ be as in Proposition \ref{prop:cube_reduced}. Then Theorem \ref{thm:main1} and Proposition \ref{prop:cube_reduced}, combined with some symmetry considerations, yield that for every $n>n_0$, every non-zero $-1<m<1$ and each $K \subset [-1,1]^n$ such that $b(K) = m\mathbf{1}_n$, 
one has
$$\sqrt[n]{\vol{K}} < 2 \sqrt{1-m^2}.$$
The validity of the theorem for all $n$ follows by a simple tensorization argument. 
Indeed, for any positive integer $n$ there is a positive integer $\ell$ such that $\ell\,n>n_0$. Then for every non-zero $-1<m<1$ and each $K \subset [-1,1]^n$ such that $b(K) = m\mathbf{1}_n$, 
we have $K^{\ell}\subset [-1,1]^{\ell n}$, 
$b\left(K^{\ell}\right)=m\mathbf{1}_{\ell n}$
and hence
$\sqrt[n]{\vol{K}}=\sqrt[\ell n]{\vol{K^{\ell}}}<2\sqrt{1-m^2}$.
\end{proof}

\subsection{A general bound}

It is natural to compare the conjectured upper bound in Problem  \ref{prob:main} to that obtained by $x$-symmetric subsets of $[-1,1]^n$, that is subsets $L\sub[-1,1]^n$ such that $L-x=x-L$. For these subsets, one clearly obtains 
\begin{equation*}
\vol{L}\le 2^n\prod_{i=1}^n (1-|x_i|),
\end{equation*}
where the maximum is attained at $L=[-1,1]^n\cap(2x-[-1,1]^n)$. Hence, by the Milman-Pajor inequality \cite{MP} (also see \cite{HTSV} for recent improvements of the inequality), for every convex body $K\sub\RR^n$ such that $b(K)=x$, we have 
\begin{equation}\label{eq:MP}
\vol{K}\le 2^n\,\vol{K\cap(2x -K)}\le 4^n\prod_{i=1}^n (1-|x_i|).    
\end{equation}
It is conjectured that the extremal body in  the  Milman-Pajor inequality is the simplex, for which the constant obtained is of the order of $(e/2)^n$ (instead of $2^n$). This stronger version of the Milman-Pajor inequality would imply that \eqref{eq:MP} holds with a constant of the order of $e^n$ instead of $4^n$. We can prove the latter directly, by an argument used in the proof of \cite{HP}*{Theorem 1.4}, which employs  Gr\"{u}nbaum's inequality \cite{Grunbaum60}.

\begin{proposition}\label{prop:Greentree}
Let $K\subseteq[-1,1]^n$ be a measurable set with positive volume and suppose that $b(K)=(x_1,\dots,x_n)$. Then,
\begin{equation}\label{eq:Greentree}
   \vol{K}<e^n \prod_{i=1}^n\left(1-|x_i|\right).
\end{equation}
\end{proposition}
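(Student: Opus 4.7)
My plan is to reduce to convex $K$ via Theorem~\ref{thm:main1}, then apply Grünbaum's inequality iteratively in the coordinate directions.

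For the reduction, I apply Theorem~\ref{thm:main1} to $\mu$ = Lebesgue measure restricted to $[-1,1]^n$, which is a finite absolutely continuous Borel measure with bounded first moment. For any measurable $K\subseteq[-1,1]^n$ with $b(K)=x\neq 0$ (the case $x=0$ is trivial, since $\vol{K}\leq 2^n<e^n$), the theorem yields a halfspace $\mathcal{H}\subset\RR^n$ with $b(\mathcal{H}\cap[-1,1]^n)=x$ and $\vol{\mathcal{H}\cap[-1,1]^n}\geq\vol{K}$. Since $\mathcal{H}\cap[-1,1]^n$ is convex, it suffices to prove \eqref{eq:Greentree} when $K$ is convex. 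Reflecting coordinates, I further assume $x_i\geq 0$ for every $i$, so $\prod(1-|x_i|)=\prod(1-x_i)$.

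For the main estimate I would invoke Grünbaum's inequality: for any convex body $C\subset\RR^n$ with centroid $c$, every halfspace $H^+$ passing through $c$ satisfies
\[
\vol{C\cap H^+}\geq\left(\frac{n}{n+1}\right)^n\vol{C}>\vol{C}/e.
\]
I would iterate this in the coordinate directions: set $K^{(0)}=K$ and $K^{(k)}=K^{(k-1)}\cap\{y_k\geq b_k(K^{(k-1)})\}$, where $b_k(\cdot)$ denotes the $k$-th coordinate of the centroid. Each Grünbaum application costs a factor of at most $e$, so $\vol{K^{(n)}}>\vol{K}/e^n$. Since $K^{(n)}\subseteq\prod_{k=1}^n\bigl[b_k(K^{(k-1)}),1\bigr]$, this gives
\[
\vol{K}<e^n\prod_{k=1}^n\bigl(1-b_k(K^{(k-1)})\bigr).
\]

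The main obstacle is to compare the iterated thresholds $b_k(K^{(k-1)})$ with the target values $x_k$. The first one satisfies $b_1(K^{(0)})=x_1$, but for $k\geq 2$ the centroid of the sliced body in direction $e_k$ may drift away from $x_k$; explicit 2D examples (for instance, the triangle $\{y_1+y_2\geq 0,\,y_1\leq 1,\,y_2\leq 1\}$ reflected to have centroid $(1/3,1/3)$, where one can compute $b_2(K^{(1)})=7/45<1/3=x_2$) show that the box bound one obtains can strictly exceed $\prod(1-x_k)$. Closing this gap—either by choosing the slicing order adaptively based on the current centroid configuration, or by slicing at the prescribed heights $y_k=x_k$ and invoking a quantitative Grünbaum-type bound for off-center halfspaces (tracking the cumulative volume loss against the gain in the boundary position)—is the technical core of the argument alluded to in \cite{HP}*{Theorem 1.4} that the authors adapt here.
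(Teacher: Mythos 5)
Your reduction to convex $K$ via Theorem~\ref{thm:main1} and the normalization $x_i\geq 0$ both match the paper, and your instinct to use Gr\"unbaum's inequality is the right one. But the core estimate is not established: as you yourself observe with the 2D triangle example, iterating Gr\"unbaum in the coordinate directions produces thresholds $b_k(K^{(k-1)})$ that can drift below $x_k$, so the box bound $e^n\prod_k\bigl(1-b_k(K^{(k-1)})\bigr)$ can strictly exceed $e^n\prod_k(1-x_k)$. The closing paragraph only gestures at possible repairs (adaptive ordering, off-center Gr\"unbaum bounds) without carrying any of them out, so the proof has a genuine gap exactly where you flag it.

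The missing idea is that one application of Gr\"unbaum's inequality in a single, weighted direction suffices. Take $\theta:=\left(\tfrac{1}{1-x_1},\ldots,\tfrac{1}{1-x_n}\right)$ and cut $K$ by the halfspace $\{y:\iprod{\theta}{y-b(K)}\geq 0\}$ through its centroid. Gr\"unbaum gives $\left(\tfrac{n}{n+1}\right)^n\vol{K}\leq\vol{K\cap\{\iprod{\theta}{y-b(K)}\geq 0\}}$, and the key point is that this cap is contained in $\left\{y\in(-\infty,1]^n\mid\iprod{\theta}{\mathbf{1}_n-y}\leq\iprod{\theta}{\mathbf{1}_n-b(K)}=n\right\}$, which after the substitution $z=\mathbf{1}_n-y$ is the simplex $\{z\in[0,\infty)^n\mid\iprod{\theta}{z}\leq n\}$ of volume exactly $\tfrac{n^n}{n!}\prod_{i=1}^n(1-x_i)$. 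The choice of $\theta$ makes the normalization $\iprod{\theta}{\mathbf{1}_n-b(K)}=n$ hold automatically, so all $n$ coordinates are handled simultaneously and there is no drift to control. This yields $\vol{K}\leq\tfrac{(n+1)^n}{n!}\prod_{i=1}^n(1-x_i)=\prod_{m=1}^n\left(1+\tfrac1m\right)^m\prod_{i=1}^n(1-x_i)<e^n\prod_{i=1}^n(1-x_i)$, with the strict inequality coming from $\prod_{m=1}^n(1+\tfrac1m)^m<e^n$ rather than from the Gr\"unbaum step.
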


We remark that the constant $e$ in \eqref{eq:Greentree} may not be replaced by a smaller constant. 
Indeed, for any $1-\frac{1}{n}<t<1$, consider the simplex $K_{n,t}:=\left\{y\in[-1,1]^n\mid\iprod{\mathbf{1}_n}y\geq n\,t\right\}$. Then
$b\left(K_{t,n}\right)=s\mathbf{1}_n$ where $s=(n\,t+1)/{(n+1)}>t$, $\vol{K_{n,t}}=n^n(1-t)^n/n!$ and hence
$$
\frac{\sqrt[n]{\vol{K_{n,t}}}}{1-s}>\frac{\sqrt[n]{\vol{K_{n,t}}}}{1-t}=\frac{n}{\sqrt[n]{n!}}\xrightarrow[n\to\infty]{}e.
$$

\begin{proof}[Proof of Proposition \ref{prop:Greentree}]
With no loss of generality we may assume that $x_1,\ldots,x_n$ are all non-negative. We may also assume that $K$ is convex, by Theorem \ref{thm:main1}.
By Gr\"unbaum's inequality \cite{Grunbaum60}, for $\theta:=\left(\frac{1}{1-x_1},\ldots,\frac{1}{1-x_n}\right)$, we have
\begin{align*}
\left(\frac{n}{n+1}\right)^{n}\vol K  &\leq \vol{\left\{ y\in K\mid \iprod{\theta}{y-b(K)}\geq 0\right\} }\\
&\leq\vol{\left\{ y\in(-\infty,1]^n\mid \iprod{\theta}{y-b(K)}\geq 0\right\} }\\
 &=\vol{\left\{ y\in(-\infty,1]^n\mid \iprod{\theta}{\mathbf{1}_n-y}\leq \iprod{\theta}{\mathbf{1}_n-b(K)}=n \right\} }\\
 &=\vol{\left\{ z\in[0,\infty)^n\mid \iprod{\theta}{z}\leq n \right\} }= \frac{n^n}{n!}\prod_{i=1}^n\left(1-x_i\right)
\end{align*}
and hence, 
\begin{equation*}
\vol K\leq\frac{(n+1)^n}{n!}\prod_{i=1}^n\left(1-x_i\right)=\prod_{m=1}^n\left(1+\frac{1}{m}\right)^m\prod_{i=1}^n\left(1-x_i\right)<e^n\prod_{i=1}^n\left(1-x_i\right).
\qedhere\end{equation*}
\end{proof}

As mentioned, the above proof is based on the proof of \cite{HP}*{Theorem 1.4}, which is a Log-Minkowski inequality for a centered simplex and a centered convex body. We note that this inequality may be formulated as an isobarycenteric inequality for a simplex, say 
$S=\{x\in[0,\infty)^n\mid\iprod{\mathbf{1}_n}x\leq 1\}$, as follows: given a convex body $K\subseteq S$ such that $b(K)=(x_1,\ldots,x_n)$, one has
\begin{equation}\label{eq:simplex_isobarycentric}
\vol{K} \leq \frac{(n+1)^n}{n!}\left[ \left(1-\sum_{i=1}^n x_i\right) \prod_{i=1}^n x_i \right]^\frac{n}{n+1}.    
\end{equation}

We remark that an argument similiar to the proof of Proposition \ref{prop:Greentree} yields the following slightly stronger inequality:
$$
\vol{K} \leq  \frac{(n+1)^n}{n!}\cdot\frac{\left(1-\sum_{i=1}^n x_i\right) \prod_{i=1}^n x_i}{\max\left\{x_1,\ldots,x_n,1-\sum_{i=1}^n x_i\right\}}.
$$

\section{Discussion and open questions}\label{sec:discussion}
 
\subsection{}
It is not clear whether a complete solution to Problem \ref{prob:main} can be obtained by a reduction to the diagonal case, i.e., Theorem \ref{thm:cube_main}. 
A different approach to resolve Problem \ref{prob:main} in the general case is the following. Let $x=(x_1,\dots,x_n)\in[-1,1]^n$ and let $K_x$ be the unique intersection of $[-1,1]^n$ with a halfspace such that $b(K_x)=x$. By Theorem \ref{thm:main1}, such an intersection exists and it has maximal volume among all subsets of $[-1,1]^n$ whose barycenter is $x$. Numerical evidence suggests that $$\frac{\vol{K_x}}{\prod_{i=1}^n \sqrt{1-x_i^2}}$$ increases as $|x_i|$ decreases for each coordinate separately, which would clearly yield an affirmative answer to the problem.

\subsection{}
A natural question which generalizes Problem \ref{prob:main} is the following: Given a convex body $L\sub\RR^n$ and a point $x\in L$, what can be said about the maximal volume of a subset $K$ of $L$ which is centered at $x$? This question can also be extended to the realm of measures.
While Theorem \ref{thm:main1} characterizes the shape of $K$, it does not provide a bound on its volume.
Also see \eqref{eq:simplex_isobarycentric} for the case where $L$ is a simplex. 

In a dual direction, given a convex body $L\sub\RR^n$ and a point $x\in L$, what is the minimal volume of a set $K$ containing $L$, which is centered at $x$\,?
In \cite{HMC2016}*{Lemma 2.2} a special case of this problem was resolved. For $L = \conv{\{0,u_1,\ldots,u_n\}}$ and $K\supset L$ with $b(K)=0$, one has
$$ \vol{K} \geq \frac{n+1}{n!} | \det(u_1,\ldots,u_n) | $$
with equality only for $K = \conv{\{u_1,\ldots,u_n,-(u_1+\cdots+u_n)\}}$.

\subsection{}
Let $Y_1,\dots Y_n$ be independent log-concave random variables on $\RR$ whose density is even. It is a known fact that the ``peakedness" of $X_n=\frac{1}{n}\sum_{i=1}^n Y_i$ is increasing with $n$, see e.g., \cites{P, SW} and references therein. This means that $p_{X_n}(t)=\PP(X_n>t)$ increases with $n$ for $t<0$ and decreases for $t>0$. (In other words, $\PP(|X_n|<t)$ increases with $n$.) In particular, if $X_n$ is the average of $n$ independent uniformly distributed random variables on $[-1,1]$, as in \eqref{eq:diag}, $p_{X_n}(t)$ is decreasing with $n$ for $t>0$. However, numerical evidence suggests that both $\sqrt[n]{p_{X_n}(t)}$ and $m_X(t)$ are increasing with $n$ for any $-1<t<1$. It would be interesting to prove these monotonicity properties of $\sqrt[n]{p_{X_n}}$ and $m_{X_n}$, although this would not provide an alternative proof for \eqref{eq:diag}.

\subsection{}
Our interest in \eqref{eq:deriv_enough} originated from Problem \ref{prob:main}. However, as mentioned in Remark \ref{rem:m_increasing}, this inequality reflects an upper bound on the truncated mean $m_X(t)$ where $X$ is the average of $n$ uniformly distributed random variables on $[-1,1]$. It is intriguing to find a more natural bound for $m_X(t)$ in this case, and to study $m_X(t)$ for more general distributions. In particular,  it would be interesting to obtain bounds on the truncated mean, depending on various parameters such as the variance of $X$, its degree of concavity, entropy etc. 
\appendix

\section{}\label{sec:appendix}

\begin{proof}[Proof of Observation \ref{obs:one minus}]
Note that
$$
\ec{X}{X<t}=\ec{-X}{-X<t}=-\ec{X}{X>-t}=-m_X(-t).
$$
Therefore, 
\begin{align*}
0&=\EE[X]={\rm Pr}(X>t)\ec{X}{X>t}+{\rm Pr}(X<t)\ec{X}{X<t}\\
&=p_X(t)m_X(t)-\left(1-p_X(t)\right)m_X(-t),
\end{align*}
which concludes the proof.
\end{proof}

\begin{proof}[Proof of Claim \ref{claim:var}]
For every $-1\leq t\leq 1$,
$$
\int_t^1 p_X(x)\d x=\frac{1}{2^n}{\rm vol}\left(\left\{(u,x)\in[-1,1]^n\times[-1,1]\,\bigg|\, \frac{1}{n}\iprod {\mathbf{1}_n} u >x>t\right\}\right).
$$
Hence, by the Brunn-Minkowski inequality, the function $t\mapsto\int_t^1 p_X(x)\d x$ is log-concave in the interval $[-1,1]$.
Therefore, its logarithmic derivative 
$t\mapsto\frac{-p_X(t)}{\int_t^1 p_X(x)\d x}$ is decreasing in the interval $[-1,1)$.
Hence, the function $t\mapsto \frac{1}{p_X(t)}\int_t^1 p_X(x)\d x$ is decreasing in the interval $[-1,1)$.
In particular, for every $0\leq t<1$, by \eqref{eq:m},
\begin{equation*}
m_X(t)-t=\frac{1}{p_X(t)}\int_t^1 p_X(x)\d x\leq \frac{1}{p_X(0)}\int_0^1 p_X(x)\d x=m_X(0)-0.
\end{equation*}
The claim follows since by the Cauchy-Schwartz inequality,
\begin{equation*}
m_X(0)=\ec{X}{X>0}\leq\sqrt{\ec{X^2}{X>0}}=\sqrt{\EE[X^2]}. 
\qedhere\end{equation*}
\end{proof}

\begin{proof}[Proof of Claim \ref{claim:small_exact}]
For every $-1\leq t\leq 1$,
$$
p_X(t)=\frac{1}{2^n}{\rm vol}\left(\left\{u\in[-1,1]^n\,\bigg|\, \frac{1}{n}\iprod {\mathbf{1}_n} u >t\right\}\right).
$$
Hence, $p_X$ is $\frac{1}{n}$-concave in the interval $[-1,1]$, by the Brunn-Minkowski inequality. 
In particular, for every $-1\leq t<x\leq 1$, 
\begin{align*}
\frac{1-x}{1-t}\sqrt[n]{p_X\left(t\right)}&=\frac{1-x}{1-t}\sqrt[n]{p_X\left(t\right)}+\frac{x-t}{1-t}\sqrt[n]{p_X\left(1\right)}\\
&\leq\sqrt[n]{p_X\left(\frac{1-x}{1-t}t+\frac{x-t}{1-t} 1\right)}=\sqrt[n]{p_X\left(x\right)}
\end{align*}
and hence,
$$
p_X(t)\leq\frac{(1-t)^n}{(1-x)^n}p_X(x).
$$
Inequality \eqref{eq:small_exact} follows as a direct computation shows that for every $1-\frac{1}{n}\leq x\leq 1$,
\begin{equation*}
p_X(x)=\frac{(n(1-x)/2)^n}{n!}.
\end{equation*}
We proceed to prove \eqref{eq:chernoff}.
For every $t$ and every $\mu>0$, by Chernoff's bound,
\begin{align*}p_X(t)&={\rm Pr}\left(\sum_{i=1}^n U_i>nt\right)\leq e^{-\mu n t}\prod_{i=1}^n\EE\left[e^{-\mu U_i}\right]\\
&=e^{-\mu n t}\left(\frac{1}{2}\int_{-1}^1 e^{-\mu x}\d x\right)^n=e^{-\mu n t}\left(\frac{e^{\mu}-e^{-\mu}}{2\mu}\right)^n.
\end{align*}
For every $\mu> 0$,
$$\frac{e^{\mu}-e^{-\mu}}{2\mu}=\sum_{k=0}^{\infty}\frac{1}{(2k+1)!}\mu^{2k}<\sum_{k=0}^{\infty}\frac{1}{k!}\left(\frac{\mu^2}{6}\right)^k=e^{\frac{\mu^2}{6}},$$
and hence for every $t$, 
$$p_X(t)\leq e^{-\mu n t}\left(\frac{e^{\mu}-e^{-\mu}}{2\mu}\right)^n<  e^{-\mu n t}e^{\frac{\mu^2}{6}n}.$$
Therefore, for every $t>0$, by taking $\mu=3t$,
\begin{equation*}p_X(t)< e^{-3nt^2}e^{3nt^2/2}=e^{-3nt^2/2}.
\qedhere\end{equation*}
\end{proof}
\bibliographystyle{amsplain}
\bibliography{ref}

\end{document}